\newtheorem{theorem}{Theorem}
\newtheorem{corollary}{Corollary}
\newtheorem{proposition}{Proposition}
\newtheorem{definition}{Definition}
\newtheorem{remark}{Remark}
\newtheorem{example}{Example}
\newcommand{\R}{\mathbb{R}}
\title[On the Moore-Penrose pseudo-inversion of block matrices]{On the Moore-Penrose pseudo-inversion of block symmetric matrices and its application in the graph theory}
\author{So\v{n}a Pavl\'{\i}kov\'a${}^{1}$}
\address{${}^{1}$ Alexander Dub\v{c}ek University of Tren\v{c}\'{\i}n, Slovakia}
\author{Daniel \v{S}ev\v{c}ovi\v{c}${}^{2}$}
\address{${}^{2}$ Department of Applied Mathematics and Statistics, Faculty of Mathematics Physics and Informatics, Comenius University, Mlynsk\'a dolina, 842 48, Bratislava, Slovakia. Corresponding author: {\tt sevcovic@fmph.uniba.sk} }
\thanks{Acknowledgments: Support of the Slovak Research and Development Agency under the projects APVV-19-0308 (SP), and APVV-20-0311 (DS) is kindly acknowledged.}
\begin{document}

\maketitle

\begin{abstract}
The purpose of this paper is to analyze the Moore-Penrose pseudo-inversion of symmetric real matrices with application in the graph theory. We introduce a novel concept of positively and negatively pseudo-inverse matrices and graphs. We also give sufficient conditions on the elements of a block symmetric matrix yielding an explicit form of its Moore-Penrose pseudo-inversion. Using the explicit form of the pseudo-inverse matrix we can construct pseudo-inverse graphs for a class of graphs which are constructed from the original graph by adding pendent vertices or pendant paths. 

\medskip
\noindent Keywords: Pseudo-invertible matrix and graph; positively and negatively pseudo-invertibility; bridged graph generalized Schur complement; 

\smallskip
\noindent 2000 MSC:   05C50 05B20 05C22 15A09 15A18 15B36 

\end{abstract}

\section{Introduction}

In this paper we investigate the Moore-Penrose pseudo-inversion of block symmetric matrices with application in the graph theory. We present sufficient conditions on the elements of a block matrix yielding an explicit block matrix form of its Moore-Penrose inversion. We also introduce a novel concept of positively and negatively pseudo-invertible matrix. In the context of the graph theory we extend the concept of positive and negative invertibility of a graph due to Godsil \cite{Godsil1985}, Pavl\'\i kov\'a and \v{S}ev\v{c}ovi\v{c} \cite {Pavlikova2016} to the case when its adjacency matrix is not invertible.  

In the past decades various ways of introducing inverses of graphs have been proposed. They are based on inversion of the adjacency matrix. Except of the trivial case when the graph is a union of isolated edges, the inverse matrix need not define a graph again because it may contain negative entries (cf. \cite{Har}).  A successful approach how to overcome this difficulty was proposed by Godsil \cite{Godsil1985} who defined a graph to be invertible if the inverse of its non-singular adjacency matrix is diagonally similar (cf. \cite{Zas}) to a nonnegative integral matrix representing the adjacency matrix of the inverse graph in which positive labels determine edge multiplicities. In \cite {Pavlikova2016},  Pavl\'\i kov\'a and \v{S}ev\v{c}ovi\v{c} extended this notion to a wider class of graphs by introducing the concept of negative invertibility of a graph. Both positively and negatively invertible graphs have the appealing property that inverting an inverse graph gives back the original graph. For a survey of results and other approaches to graph inverse, we recommend \cite{McMc}. Godsil's ideas have been further developed in several ways. Akbari and  Kirkland \cite{KirklandAkb2007}, Kirkland and Tifenbach \cite{KirklandTif2009},  and Bapat and Ghorbani \cite{Bapat} studied inverses of edge-labeled graphs with labels in a ring, Ye \emph{et al.} \cite{Ye} considered connections of graph inverses with median eigenvalues, and Pavl\'{\i}kov\'a \cite{Pavlikova2015} developed constructive methods for generating invertible graphs by edge overlapping. A large number of related results, including a unifying approach to inverting graphs, were proposed in a recent survey paper by McLeman and McNicholas \cite{McMc}, with emphasis on inverses of bipartite graphs and diagonal similarity to nonnegative matrices.

In theoretical chemistry, biology, or statistics, spectral indices and properties of graphs representing structure of chemical molecules or transition diagrams for finite Markov chains play an important role (cf. Cvetkovi\'c \cite{Cvetkovic1988,Cvetkovic2004}, Brouwer and Haemers \cite{Brouwer2012} and references therein). Various graph energies and indices have been proposed and analyzed. For instance, the sum of absolute values of eigenvalues is referred to as the matching energy index (cf. Chen and Liu \cite{Lin2016}), the maximum of the absolute values of the least positive and largest negative eigenvalue is known as the HOMO-LUMO index (see Mohar \cite{Mohar2013,Mohar2015}, Li \emph{et al.} \cite{Li2013}, Jakli\'c  \emph{et al.} \cite{Jaklic2012}, Fowler \emph{et al.} \cite{Fowler2010}), their difference is the  HOMO-LUMO separation gap (cf.  Gutman and Rouvray \cite{Gutman1979}, Li \emph{et al.} \cite{Li2013}, Zhang and An \cite{Zhang2002}, Fowler  \emph{et al.} \cite{Fowler2001}). It turns out that properties of maximal and minimal eigenvalues of the inverse graph can be used in order to analyze the least positive and largest negative eigenvalues of an adjacency matrix. 

Recently, McDonald, Raju, and Sivakumar investigated Moore-Penrose inverses of matrices associated with certain graph classes \cite{McDonald} whose adjacency matrices are singular. They derived formulae for pseudo-inverses of matrices that are associated with a class of digraphs obtained from stars. This new class contains both bipartite and non-bipartite graphs. A blockwise representation of the (pseudo)inverse of the adjacency matrix of the Dutch windmill graph is also presented in \cite{McDonald}. In \cite{PaKri} Pavl\'{\i}kov\'{a}, and Krivo\v{n}\'{a}kov\'{a} gave characterization of pseudo-inverses of cycles. In the context of the Laplacian of connections of a graph, Hessert and Mallik \cite{HesMal} and Gago \cite{Gago} studied Moore-Penrose inverses of the signless Laplacian and edge-Laplacian of graphs or networks. They found combinatorial formulae of the Moore-Penrose inverses for trees and odd unicyclic graphs. In \cite{Pavlikova2019-Australasian} Pavl\'{\i}kov\'{a} and \v{S}ir\'a\v{n} constructed a pseudo-inverse of a weighted tree, based solely on considering maximum matchings and alternating paths.

Recall that the complete list due to McKay of simple connected graphs is available at \url{http://users.cecs.anu.edu.au/~bdm/data/graphs.html}. In Section 3 (see Table~\ref{tab-1}) we present our computations of total numbers of simple connected graphs based on the McKay's list together with a survey of integrally positive/negative invertible graphs (see Pavl\'\i kov\'a and \v{S}ev\v{c}ovi\v{c} \cite{Pavlikova2022-WWW}). When the number of vertices increases we can observe less proportion of positively or negatively invertible graphs although all adjacency matrices are pseudo-invertible in the Moore-Penrose sense. It is the main purpose of this paper to introduce and analyze concept of positively and negatively pseudo-invertible graphs. We also present possible applications in the spectral graph theory.  Furthermore, we are concerned with spectral properties of real block symmetric matrices of the form 
\begin{equation}
\mathscr{M}  = \left(
\begin{array}{cc}
A & K\\
K^T & B
\end{array}
\right),
\label{matrixC}
\end{equation}
where $A$, and $B$ are $n\times n$, and $m\times m$ real symmetric matrices, and $K$ is an $n\times m$ real matrix. Such an adjacency matrix $\mathscr{M}$ corresponds to the graph $G_{\mathcal M}$ on $n+m$ vertices which is obtained by bridging the vertices of the graph $G_A$ to the vertices of $G_B$ through the $(n,m)$-bipartite graph $G_K$ (cf.  Pavl\'{\i}kov\'a and \v{S}ev\v{c}ovi\v{c} \cite{Pavlikova2019-NACO, Pavlikova2016-CMMS}), i.\,e.,  its adjacency matrix ${\mathcal M}$ of the graph $G_{\mathcal M}$ has the form of (\ref{matrixC}) (see Fig.~\ref{fig-bipartite}). Using the explicit form of the Moore-Penrose pseudo-inverse of the block matrix $\mathscr{M}$ we derive explicit form of pseudo-inverse graphs for graphs which are constructed from a given graph by adding pendant vertices or pendant paths. 

\begin{figure}[htp]
\begin{center}
\includegraphics[width=0.95\textwidth]{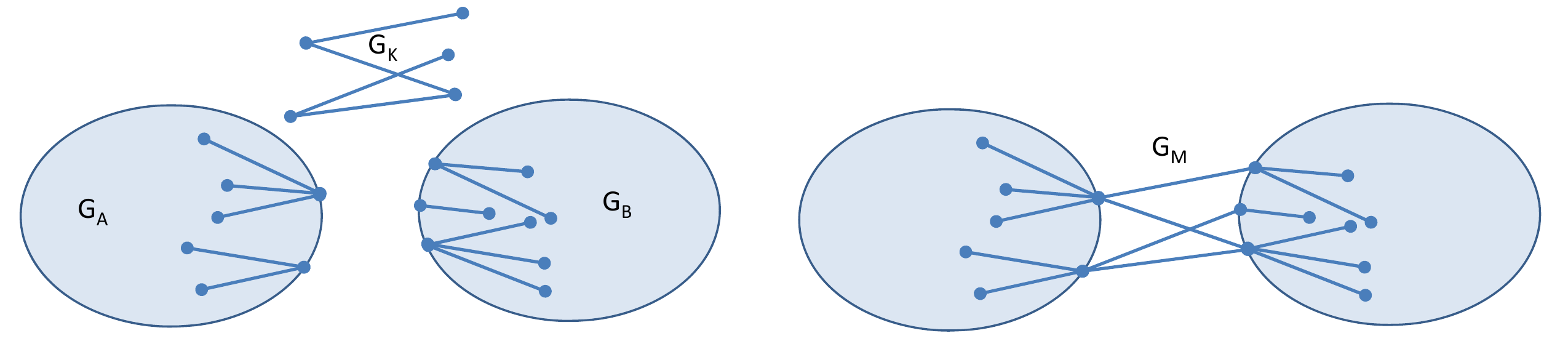}
\end{center}
\caption{
Graphs $G_A$, $G_B$, and a bipartite graph $G_K$ (left). A bridged graph through a bipartite graph $G_K$ (right).}
\label{fig-bipartite}
\end{figure}

The rest of the paper are organized as follows. In Section 2 we recall the classical concept of the Moore-Penrose pseudo-inversion $\mathscr{M}^\dagger$ of a matrix $\mathscr{M}$. Maximal and minimal eigenvalues of a pseudo-inverse matrix $\mathscr{M}^\dagger$ are closely related to the least positive $\lambda_+(\mathscr{M})>0$ and the largest negative eigenvalues $\lambda_-(\mathscr{M})<0$ of a symmetric real matrix $\mathscr{M}$. We present the key idea of derivation of the explicit Banachiewicz–Schur form of the Moore-Penrose pseudo-inverse of a block symmetric matrix. It can be used in order to characterize the least positive and the largest negative eigenvalues in terms of semidefinite optimization problem. Section 3 is devoted to pseudo-invertibility of simple connected graphs. We introduce a novel concept of positively and negatively pseudo-invertibility of a  graphs. We also present a summary table containing number of positively/negatively pseudo-invertible graphs. The list of graphs with no more than 10 vertices contains complete information on various classes of (pseudo)invertible graphs including their spectral properties. Furthermore, we present special examples of pseudo-invertible graphs constructed from a given graph by adding pendant vertices or pendant paths.

\section{Moore-Penrose pseudo-inversion of a block symmetric matrix and its application in spectral theory}

In this section we derive an explicit form of the Moore-Penrose pseudo-inverse matrix for a given block symmetric matrix satisfying certain assumptions on the entries of the block matrix. First, we recall the classical notion of the Moore-Penrose pseudo-inverse matrix. Let $\Lambda= diag\left(\lambda_{1}, \ldots, \lambda_{N}\right)$ be a diagonal matrix with real eigenvalues $\lambda_{i}\in \mathbb{R}, i,\ldots, N$. For a diagonal real matrix we can define the pseudo-inverse matrix as follows:
\[
\Lambda^{\dagger}=diag\left(\mu_{1}, \ldots, \mu_{N}\right) \quad where \quad \mu_{i}=1/\lambda_{i} \ \ if \  \lambda_{i} \neq 0, \  and, \  \mu_{i}=0 \  if \  \lambda_{i}=0.
\]
If $\mathscr{M}$ is an $N\times N$ real symmetric matrix, then it is orthogonally similar to a diagonal matrix $\Lambda$, i.\,e.,  there exists a matrix $\mathscr{P}$ such that $\mathscr{P}^{T}\mathscr{P}=\mathscr{P}\mathscr{P}^{T}=I$ and $\mathscr{P}\mathscr{M}\mathscr{P}^{T}=\Lambda=\operatorname{diag}\left(\lambda_{1}, \ldots, \lambda_{N}\right)$, $\lambda_{i}$ being real eigenvalues of $\mathscr{M}$. Let  $\mathscr{M}^{\dagger}$  be the Moore-Penrose pseudo-inverse matrix satisfying Moore-Penrose axioms for symmetric matrices, i.\,e., 
\begin{equation}
\mathscr{M}\mathscr{M}^{\dagger} = \mathscr{M}^{\dagger}\mathscr{M}, \quad \mathscr{M}\mathscr{M}^{\dagger}\mathscr{M}=\mathscr{M}, \quad \mathscr{M}^{\dagger}\mathscr{M}\mathscr{M}^{\dagger}=\mathscr{M}^{\dagger} .
\label{MP}
\end{equation}
Then $\mathscr{M}^{\dagger}$ can be uniquely defined as follows: 
$\mathscr{M}^{\dagger}
= \mathscr{P}^{T}\Lambda^{\dagger}\mathscr{P}$. Clearly, $\mathscr{M}^{\dagger}$ is a symmetric matrix, and $(\mathscr{M}^{\dagger})^\dagger=\mathscr{M}$. Furthermore, $\mathscr{M}^{\dagger}= \mathscr{M}^{-1}$ provided that the matrix $\mathscr{M}$ is invertible. If $K$ is an $n\times m$ real matrix, then its Moore-Penrose inverse matrix is an $m\times n$ matrix $K^\dagger$ which is uniquely determined by the Moore-Penrose identities:
\begin{equation}
(K K^{\dagger})^T = K K^{\dagger}, \quad (K^{\dagger} K)^T = K^{\dagger} K, \quad K K^{\dagger} K =K, \quad K^{\dagger} K K^{\dagger}= K^{\dagger} .
\label{MPmn}
\end{equation}
For further properties of generalized Moore-Penrose inverses we refer the reader to \cite{Ben}.

Suppose that $\mathscr{M}$ is a symmetric real matrix having positive and negative eigenvalues. This is a typical case when $\mathscr{M}$ represents an adjacency matrix of a simple graph with no loops having  zeros in its diagonal, and so the largest eigenvalue is positive whereas the least eigenvalue is negative. Clearly, the least positive $ \lambda_+(\mathscr{M})$ and largest negative $\lambda_-(\mathscr{M})$ eigenvalues of $\mathscr{M}$ can be expressed as follows (see e.g.  \cite{Pavlikova2016}):
\begin{equation}
  \lambda_+(\mathscr{M}) = \lambda_{max}(\mathscr{M}^\dagger)^{-1}, \qquad \lambda_-(\mathscr{M}) = \lambda_{min}(\mathscr{M}^\dagger)^{-1}.
\label{lambdaminmax}
\end{equation}
Here $\lambda_{max}(\mathscr{M}^\dagger)>0$ and $\lambda_{min}(\mathscr{M}^\dagger)= - \lambda_{max}(-\mathscr{M}^\dagger)<0$ are the maximal and minimal eigenvalues of the inverse matrix $\mathscr{M}^\dagger$, respectively. 

In what follows, we denote by $\R^{n\times m}$ the vector space consisting of all $n\times m$ real matrices. Suppose that $A\in \R^{n\times n}$, $B\in \R^{m\times m}$ are symmetric matrices, and $K\in \R^{n\times m}$. Consider the following $\left(n+m\right)\times \left(n+m\right)$ real symmetric
block matrix:
\begin{equation}
\mathscr{M}=\left(\begin{array}{cc}
A & K \\
K^{T} & B
\end{array}\right).
\label{blockM}
\end{equation}
The Moore-Penrose inverse $\mathscr{M}^\dagger$ of a block symmetric matrix $\mathscr{M}$ always exists and it has the block symmetric form:
\begin{equation}
\mathscr{M}^\dagger
=\left(\begin{array}{cc}
E & H \\
H^{T} & F
\end{array}\right).
\label{blockMinv}
\end{equation}
One can easily set up a system of matrix equations for the elements $E,F, H$ by following the basic axioms for the Moore-Penrose inverse. On the other hand, this system of nonlinear matrix equations need not have an explicit solution. Assuming further conditions on $A, B$, and $K$ there exists the so-called Banasiewicz-Schur explicit form of the Moore-Penrose pseudo-inverse matrix $\mathscr{M}^{\dagger}$ of $\mathscr{M}$. Although the form of  $\mathscr{M}^{\dagger}$ is known in the literature (see e.g. \cite{Castro}, \cite{Tian}), we give a short sketch of its derivation for reader's convenience. This way we recall the method of block diagonalization of  $\mathscr{M}$ and $\mathscr{M}^{\dagger}$ which is important matrix algebra tool for reformulation of the nonlinear eigenvalue problem as nonlinear optimization problem with semi-definite matrix inequality constraints. 
\color{black}
Generalized Moore-Penrose inverses of bordered matrices have been studied by F. Hall in a series of papers (see e.g. \cite{Hall1976}). In the context of the graph theory integral matrices play important role. Integral generalized inverses of integral matrices have been studied by Batigne, Hall, and Katz in \cite{Hall1978}. 
\color{black}

We apply the Schur complement trick. We define the matrix $\mathscr{Q}$ having its inverse matrix  $\mathscr{Q}^{-1}$ as follows:
\[
\mathscr{Q}=\left(\begin{array}{cc}
I & -K B^\dagger \\
0 & I
\end{array}\right), 
\qquad\qquad \mathscr{Q}^{-1}=\left(\begin{array}{cc}
I & K B^\dagger \\
0 & I
\end{array}\right) .
\]
Assuming the structural condition $K \left(I-B^\dagger B\right)=0$, a straightforward calculation yields the identity:
\begin{eqnarray*}
\mathscr{Q} \mathscr{M} \mathscr{Q}^{T} 
&=&\left(\begin{array}{cc}
S_{A} & K\left(I-B^\dagger B\right) \\
\left(I-BB^\dagger \right)K^{T} & B
\end{array}\right)  =
\left(\begin{array}{cc}
S_{A} & 0  \\
0 & B
\end{array}\right),
\end{eqnarray*}
where $S_{A}=A-K B^\dagger K^{T}$ is the generalized Schur complement of the matrix $B$ matrix in $\mathscr{M}$. Next, following the ideas of the Schur complement trick,  we propose the pseudo-inverse matrix in the so-called Banachiewicz–Schur form:
\begin{equation}
\mathscr{M}^{\dagger} = \mathscr{Q}^{T}
\left(\begin{array}{cc}
S_{A}^{\dagger} & 0  
\\
0 & B^\dagger
\end{array}\right)\mathscr{Q}
=
\left(\begin{array}{cc}
S_{A}^{\dagger} & -S_{A}^{\dagger}K B^\dagger   \\
-B^{\dagger}K^{T}S_{A}^{\dagger} & B^\dagger + B^{\dagger}K^{T}S_{A}^{\dagger}KB^{\dagger}
\end{array}\right).
\label{MA}
\end{equation}
Again, using the assumption  $K \left(I-B^\dagger B\right)=0$ we obtain 
\begin{eqnarray*}
\mathscr{M} \mathscr{M}^{\dagger} &=& 
\left(\begin{array}{cc}
S_A S_{A}^{\dagger} & (I- S_A S_{A}^{\dagger})K B^\dagger  \\
0  & B B^\dagger
\end{array}\right) .
\end{eqnarray*}
Hence, $\mathscr{M}\mathscr{M}^{\dagger} = \mathscr{M}^{\dagger}\mathscr{M}$ provided that the condition $(I- S_A S_{A}^{\dagger})K = 0$ is satisfied. Finally, one can  easily verify the remaining Moore-Penrose axioms, i.\,e.,  $\mathscr{M}\mathscr{M}^{\dagger}\mathscr{M}=\mathscr{M}$,  $\mathscr{M}^{\dagger}\mathscr{M}\mathscr{M}^{\dagger}=\mathscr{M}^{\dagger}$. 

\begin{definition}\label{KABcompatible}
Let $A\in \R^{n\times n}$ and $B\in \R^{m\times m}$ be symmetric matrices. We say that a matrix $K\in \R^{n\times m}$ is $(A,B)$ compatible, if
$K \left(I-B^\dagger B\right)=0$ and  $(I- S_A S_{A}^{\dagger})K = 0$ where $S_{A}=A-K B^\dagger K^{T}$.

\end{definition}

Analogously, exchanging the role of $A$ and $B$ matrices we conclude that the Moore-Penrose pseudo-inverse matrix  in the form:
\begin{eqnarray}
\mathscr{M}^{\dagger} &=& 
\left(\begin{array}{cc}
A^\dagger + A^{\dagger}KS_{B}^{\dagger}K^{T}A^{\dagger} & -A^{\dagger}KS_{B}^{\dagger} \\
-S_{B}^{\dagger}K^{T}A^{\dagger} & S_{B}^{\dagger}
\end{array}\right), 
\label{MB}
\end{eqnarray}
provided that $\left(I-A A^\dagger \right) K=0$, and $K (I-  S_{B}^{\dagger} S_B) = 0$. Since the Moore-Penrose pseudo-inverse matrix $\mathscr{M}^\dagger$ is unique, we obtain the following proposition.

\begin{proposition}\label{theo1}
Let $A\in \R^{n\times n}$ and $B\in \R^{m\times m}$ be symmetric matrices, $K\in \R^{n\times m}$. Assume that $K \left(I-B^\dagger B\right)=0$, $(I- S_A S_{A}^{\dagger})K = 0$, and $\left(I-A A^\dagger \right) K=0$, $K (I-  S_{B}^{\dagger} S_B) = 0$, i.\,e.,  $K$ is $(A,B)$ compatible and $K^T$ is $(B,A)$ compatible. Then for the generalized Schur complements $S_A= A -K B^\dagger K^T$, and $S_B=B-K^{T} A^\dagger K$ the following identities hold:
\begin{enumerate}
    \item \quad $S_{A}^{\dagger}KB^{\dagger} = A^\dagger KS_{B}^{\dagger}$,
    \item \quad 
    $S_{A}^{\dagger} = A^\dagger + A^{\dagger}KS_{B}^{\dagger}K^{T}A^{\dagger}$,
    \item \quad $S_{B}^{\dagger} = B^\dagger + B^{\dagger}K^{T}S_{A}^{\dagger}KB^{\dagger}$.
\end{enumerate}

\end{proposition}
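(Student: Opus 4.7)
The proof plan is essentially an appeal to the uniqueness of the Moore-Penrose pseudo-inverse. The derivation preceding the proposition produces two apparently different block-matrix expressions for $\mathscr{M}^\dagger$: formula (\ref{MA}) was obtained by block-diagonalizing $\mathscr{M}$ via the upper triangular matrix $\mathscr{Q}$ and the Schur complement $S_A$, under the hypothesis $K(I-B^\dagger B)=0$ together with $(I-S_A S_A^\dagger)K=0$. Formula (\ref{MB}) was obtained by the symmetric construction, exchanging the roles of $A$ and $B$, under the dual hypotheses $(I-AA^\dagger)K=0$ and $K(I-S_B^\dagger S_B)=0$.

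Under the combined compatibility assumptions of the proposition, both derivations apply simultaneously, so $\mathscr{M}^\dagger$ admits both block representations. First I would observe that since the Moore-Penrose pseudo-inverse of a matrix is unique (determined by the Moore-Penrose axioms (\ref{MP})), the two block matrices in (\ref{MA}) and (\ref{MB}) must be equal entry by entry. Comparing the corresponding blocks then yields the three claimed identities immediately:
\begin{itemize}
\item Equating the $(1,1)$ blocks gives $S_A^\dagger = A^\dagger + A^\dagger K S_B^\dagger K^T A^\dagger$, which is identity (2).
\item Equating the $(2,2)$ blocks gives $B^\dagger + B^\dagger K^T S_A^\dagger K B^\dagger = S_B^\dagger$, which is identity (3).
\item Equating the $(1,2)$ blocks gives $-S_A^\dagger K B^\dagger = -A^\dagger K S_B^\dagger$, which is identity (1). (The $(2,1)$ blocks give the transposed relation, which is consistent since $A$, $B$, $S_A$, $S_B$ are symmetric and hence so are their pseudo-inverses.)
\end{itemize}

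The main technical point that needs to be justified is that formulas (\ref{MA}) and (\ref{MB}) really do coincide with $\mathscr{M}^\dagger$ under the stated hypotheses; this was already sketched in the text, where it was shown that the Banachiewicz--Schur form together with $K(I-B^\dagger B)=0$ and $(I-S_A S_A^\dagger)K=0$ yields $\mathscr{M}\mathscr{M}^\dagger=\mathscr{M}^\dagger\mathscr{M}$ and hence all four Moore-Penrose axioms. Thus no further computation is needed here: the proposition is a pure uniqueness corollary of the two parallel derivations. The only subtlety I would flag is verifying that no additional hypothesis is needed for the symmetric construction, i.e., that the four conditions listed in the proposition are sufficient for \emph{both} (\ref{MA}) and (\ref{MB}) to be valid expressions for $\mathscr{M}^\dagger$; but this follows directly by reading off the two sets of structural conditions from the preceding derivations.
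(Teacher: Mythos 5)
Your proposal is correct and is essentially the paper's own argument: the paper derives the two Banachiewicz--Schur forms (\ref{MA}) and (\ref{MB}) under the respective compatibility conditions and then states explicitly that the proposition follows from the uniqueness of the Moore--Penrose pseudo-inverse, i.e., by equating the blocks exactly as you do. Your additional remark that the combined hypotheses make both representations valid simultaneously is the same (minor) point the paper leaves implicit, so nothing further is needed.
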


\begin{remark}
The structural conditions $K \left(I-B^\dagger B\right)=0$, $(I- S_A S_{A}^{\dagger})K = 0$ are independent of each other. Indeed, let us consider the following simple example:
\[
A=A^\dagger
= \left(\begin{array}{cc}
1 & 0 \\
0 & 0
\end{array}\right),
\quad
B=B^\dagger
= \left(\begin{array}{cc}
0 & 0 \\
0 & 1
\end{array}\right),
\quad
K = \left(\begin{array}{cc}
0 & 1 \\
0 & 0
\end{array}\right).
\]
Then, $K (I-B^\dagger B)=0$, but $S_A=0$, and so $S_A^\dagger=0$,  $(I- S_A S_{A}^{\dagger})K = K \not= 0$. Similarly, $\left(I-A A^\dagger\right) K=0$ but $S_B=0$, and so $K (I- S_{B}^{\dagger} S_B)=K\not=0$.
\end{remark}

\begin{proposition}\label{prop-1}
Let $\mathscr{M}$ be a block symmetric matrix of the form (\ref{blockM}). If the matrix $\mathscr{M}$ is invertible and $K \left(I-B^\dagger B\right)=0$, then the generalized Schur complement matrix  $S_A= A- K B^\dagger K^T$ is invertible. 

\end{proposition}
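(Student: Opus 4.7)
My plan is to exploit the block diagonalization $\mathscr{Q}\mathscr{M}\mathscr{Q}^T$ already carried out in the derivation of the Banachiewicz–Schur formula preceding the proposition. The whole proposition is essentially a corollary of that congruence: under the single hypothesis $K(I - B^\dagger B)=0$, the product $\mathscr{Q}\mathscr{M}\mathscr{Q}^T$ is block diagonal with diagonal blocks $S_A$ and $B$, and since $\mathscr{Q}$ is invertible, the invertibility of $\mathscr{M}$ forces each diagonal block to be invertible.

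First I would record a short symmetry remark. Because $B$ is symmetric, its Moore-Penrose pseudo-inverse $B^\dagger$ is symmetric as well, so $B^\dagger B = B B^\dagger$ is a symmetric orthogonal projection (onto the range of $B$). Transposing the hypothesis $K(I-B^\dagger B)=0$ therefore yields $(I - B B^\dagger)K^T = 0$ at no additional cost. Plugging these two vanishing expressions into the off-diagonal entries of the product $\mathscr{Q}\mathscr{M}\mathscr{Q}^T$ displayed in the excerpt gives
$$\mathscr{Q}\mathscr{M}\mathscr{Q}^T = \left(\begin{array}{cc} S_A & 0 \\ 0 & B \end{array}\right).$$

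Next, the matrix $\mathscr{Q}$ is explicitly invertible (its inverse $\mathscr{Q}^{-1}$ is exhibited in the excerpt), so whenever $\mathscr{M}$ is invertible the congruent matrix $\mathscr{Q}\mathscr{M}\mathscr{Q}^T$ is invertible as well. A block diagonal matrix is invertible precisely when each of its diagonal blocks is invertible. Therefore $S_A$ (and, as a byproduct, also $B$) must be invertible, which is the assertion of the proposition.

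I do not anticipate a real obstacle here, because the computation $\mathscr{Q}\mathscr{M}\mathscr{Q}^T = \mathrm{diag}(S_A,B)$ does all the work. The only mildly delicate point is the symmetry remark that upgrades $K(I - B^\dagger B)=0$ to the companion identity $(I - BB^\dagger)K^T = 0$; without this one would seem to need to impose both of the structural conditions entering Definition~\ref{KABcompatible}, but the symmetry of $B$ makes the second condition automatic.
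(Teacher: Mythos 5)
Your proof is correct, but it takes a different route from the paper's. The paper argues directly on the kernel: assuming $S_A x=0$, it sets $y=-B^\dagger K^T x$ and checks that $\mathscr{M}(x,y)^T=0$ (the second block row reduces to $(I-BB^\dagger)K^T x$, which vanishes by the transposed hypothesis), so invertibility of $\mathscr{M}$ forces $x=0$ and hence $S_A$ is injective. Your argument instead invokes the global congruence $\mathscr{Q}\mathscr{M}\mathscr{Q}^T=\mathrm{diag}(S_A,B)$ already displayed in the derivation of the Banachiewicz--Schur form, together with the invertibility of $\mathscr{Q}$. The two are closely related --- the paper's kernel computation is essentially your congruence applied to the single vector $\mathscr{Q}^T(x,0)^T$ --- but they are not identical in what they deliver: your version needs the observation (which you correctly supply) that symmetry of $B$ upgrades $K(I-B^\dagger B)=0$ to $(I-BB^\dagger)K^T=0$ so that \emph{both} off-diagonal blocks vanish, and in exchange it yields the strictly stronger conclusion that $B$ is invertible as well, which the paper's injectivity argument does not produce. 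Note that the paper's proof also tacitly uses the same transposed identity in the step $K^Tx+By=(I-BB^\dagger)K^Tx=0$, so your ``mildly delicate point'' is present in both arguments; you have simply made it explicit.
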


\begin{proof}
Suppose that $S_A x =0$, i.\,e.,  $Ax- K B^\dagger K^T x =0$. Take $y=- B^\dagger K^T x$. Then $A x + K y=0$, and $K^T x  + B y = \left(I- B B^\dagger\right) K^T x =0$. Hence $\mathscr{M} (x,y)^T =0$. Since $\mathscr{M}$ is invertible, \color{black}we conclude that \color{black} $x=y=0$, and $S_A$ is invertible as well. 
\end{proof}

\begin{remark}
The converse statement is not true, in general. Indeed, let us consider $A=I, B=0$, and $K=0$. Then $\mathscr{M}$ is singular although $S_A=A=I$ is invertible. On the other hand, if both $A$ and $B$ are invertible matrices, then $\mathscr{M}$ is invertible if and only the Schur complement matrix  $S_A= A- K B^{-1} K^T$ is invertible (cf. \cite{Pavlikova2016}).
\end{remark}

In the following definition we introduce a novel concept of positive and negative pseudo-invertibility of a symmetric real matrix. 

\begin{definition}\label{def:pseudoinv-matrix}
Let $A$ be a symmetric real matrix. It is called positively pseudo-invertible if the Moore-Penrose pseudo-inverse matrix  $A^{\dagger}$ is signable to a nonnegative matrix. That is there exists a diagonal $\pm1$ signature  matrix $D$ such that the matrix $D A^{\dagger} D$  contains nonnegative elements only. It is called negatively pseudo-invertible if there exists a diagonal $\pm1$ signature  matrix $D$ such that the matrix $D A^{\dagger} D$  contains nonpositive elements only.
\end{definition}

Positive and negative (pseudo)invertibility of a symmetric real matrix generalizes the concept of a monotone matrix. Recall that a matrix $A$ is monotone in the sense of Collatz if and only if the component-wise inequality $Ax\ge 0$ implies $x\ge 0$. Every monotone matrix $A$ is invertible, and the inverse matrix $A^{-1} \ge 0$ contains nonnegative elements. As a consequence, for a monotone matrix $A$ the inequality $-b\le Ax \le b$ implies $-A^{-1} b \le x \le A^{-1} b$. 
\color{black}
As a consequence, $\Vert x\Vert_\infty \le \Vert A^{-1} b\Vert_\infty \le \Vert A^{-1}\Vert_1 \Vert b\Vert_\infty$ where $ \Vert B \Vert_1 =\sum_{i,j} |B_{ij}|$, and $\Vert b\Vert_\infty =\max_{i} |b_i|$.
\color{black}
In the following proposition we show that a similar property holds for positively or negatively pseudo-invertible matrices which need not be monotone in general. 

\begin{proposition}\label{monotone}
Suppose that $n\times n$ real symmetric matrix $A$ is positive (negative) pseudo-invertible. If $ -b \le A x \le b$ \color{black} for a nonnegative vector $b\ge0$,\color{black}then
\color{black}
\[
\Vert A^\dagger A x \Vert_\infty \le \Vert A^{\dagger}\Vert_1 \Vert b\Vert_\infty.
\]
\color{black}
In particular, if the matrix $A$ is invertible, then  
\color{black}
$\Vert x\Vert_\infty \le \Vert A^{-1}\Vert_1 \Vert b\Vert_\infty$.
\color{black}

\end{proposition}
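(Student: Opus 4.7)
The plan is to adapt the classical Collatz argument for monotone matrices---which converts $-b\le Ax \le b$ into $-A^{-1}b \le x \le A^{-1}b$ via the nonnegative matrix $A^{-1}$---by absorbing the sign ambiguity of $A^\dagger$ into the signature matrix $D$ supplied by Definition~\ref{def:pseudoinv-matrix}.

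First I set $y := Ax$, so that $|y_j|\le b_j \le \|b\|_\infty$ for every index $j$, and the quantity to control is $A^\dagger A x = A^\dagger y$. Using positive (respectively negative) pseudo-invertibility, I pick a diagonal $\pm 1$ matrix $D$ such that $M := D A^\dagger D$ is entrywise sign-definite; in particular $|M_{ij}| = |A^\dagger_{ij}|$ for every pair $i,j$. Since $D$ is symmetric and involutory, one has the identity
\[
A^\dagger y \;=\; D M D y \;=\; D M z, \qquad \text{where } z := D y,
\]
and $|z_j| = |y_j|\le \|b\|_\infty$. Because premultiplication by $D$ only flips signs componentwise, $\|A^\dagger y\|_\infty = \|M z\|_\infty$.

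A single application of the triangle inequality then gives
\[
|(Mz)_i| \;\le\; \sum_j |M_{ij}|\,|z_j| \;\le\; \|b\|_\infty \sum_j |A^\dagger_{ij}| \;\le\; \|b\|_\infty \|A^\dagger\|_1,
\]
using $|M_{ij}|=|A^\dagger_{ij}|$ and the definition $\|A^\dagger\|_1 = \sum_{i,j}|A^\dagger_{ij}|$. Taking the maximum over $i$ yields $\|A^\dagger A x\|_\infty \le \|A^\dagger\|_1 \|b\|_\infty$, and the invertible case is immediate because $A^\dagger A = I$ when $A$ is invertible.

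No step is technically delicate, so there is no real obstacle. The point worth emphasizing is that the signability hypothesis enters only to route the triangle inequality through a single-sign matrix $M$; a bare triangle inequality applied directly to $A^\dagger y$ would produce the same bound. Thus the proposition is best read as a norm-form monotone property, exactly matching the motivation given in the paragraph preceding it.
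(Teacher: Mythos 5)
Your proof is correct, and it takes a genuinely different (more elementary) route than the paper's. The paper mimics the Collatz monotone-matrix argument literally: from $-b\le Ax\le b$ it passes to $-b\le DAx\le b$, multiplies this component-wise inequality by the entrywise nonnegative matrix $DA^{\dagger}D$ (this is the step where the signability hypothesis is actually used), obtains the two-sided vector bound $-DA^{\dagger}Db\le A^{\dagger}Ax\le DA^{\dagger}Db$, and only then weakens it to the stated norm inequality. You instead set $y=Ax$ and apply the triangle inequality directly, routing through $M=DA^{\dagger}D$ only formally; as you yourself note, the bound $\Vert A^{\dagger}y\Vert_\infty\le\Vert A^{\dagger}\Vert_1\Vert b\Vert_\infty$ follows from the triangle inequality for \emph{any} symmetric $A$, so the pseudo-invertibility hypothesis is not needed for the displayed inequality at all. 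What the paper's route buys is the stronger intermediate conclusion --- the component-wise envelope $-DA^{\dagger}Db\le A^{\dagger}Ax\le DA^{\dagger}Db$, which is the true analogue of the monotone-matrix property $-A^{-1}b\le x\le A^{-1}b$ and the only place the hypothesis does real work; what your route buys is brevity and the worthwhile observation that, as literally stated, the proposition is a hypothesis-free norm bound.
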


\begin{proof}
Notice that vector component-wise inequality $ -b \le y \le b$ implies  $ -\color{black} b\color{black}  \le D y \le \color{black} b\color{black}$ for any diagonal signature matrix $D$ containing $\pm 1$ elements. Without loss of generality suppose that $A$ is positively pseudo-invertible, i.\,e.,  there exists a signature matrix $D$ such that $D A^\dagger D \ge 0$. Notice that for a diagonal matrix $D$ containing  $\pm1$ elements only, we have $D D = I$. It means that $D^{-1} = D$. The inequality $-b\le Ax \le b$ implies  $-\color{black}b\color{black}\le D A x \le \color{black}b\color{black}$, and so $-D A^\dagger D  b\le D A^\dagger D D A x \le D A^\dagger D  b$. As $D^2=I$, we obtain $-D A^\dagger \color{black} D\color{black} b\le D A^\dagger A x \le D A^\dagger \color{black} D\color{black} b$. Finally, the inequality 
 $-D A^\dagger \color{black} D\color{black} b\le  A^\dagger A x \le D A^\dagger \color{black} D\color{black} b$
implies
\color{black}
$\Vert A^\dagger A x\Vert_\infty \le \Vert D A^{\dagger}\Vert_1 \Vert D b\Vert_\infty = \Vert A^{\dagger}\Vert_1 \Vert b\Vert_\infty$, 
\color{black}
as claimed. 
\color{black}
In the case $A$ is negatively pseudo-invertible the proof of the estimate is similar. 
\color{black}
If $A$ is invertible, then $A^\dagger A= A^{-1} A =I$ and the proof of proposition follows. 
\end{proof}

\color{black}
\begin{proposition}\label{symmetric}
Suppose that an $N\times N$ real symmetric matrix $A$ is positive and negative pseudo-invertible. Then 

\begin{itemize}
\item[i)]  $D_+ A ^{\dagger} D_+ = - D_- A^{\dagger} D_-$ where $D_\pm$ are signature matrices signing $A^\dagger$ to nonnegative and nonpositive matrices, respectively.
\item[ii)] The spectrum $\sigma(A)$ is symmetric, i.e. $\lambda\in\sigma(A)$ iff $-\lambda\in\sigma(A)$.

\end{itemize}
\end{proposition}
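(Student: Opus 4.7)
The plan is to prove (i) by a direct entrywise sign analysis on the matrix $A^\dagger$, and then to deduce (ii) by recognizing that (i) is precisely the statement that $A^\dagger$ is similar to $-A^\dagger$. The main obstacle is the careful bookkeeping of the diagonal $\pm 1$ signs in (i); once that identity is established, (ii) follows from a short similarity argument together with the reciprocal relation between the nonzero eigenvalues of $A$ and $A^\dagger$.

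For (i), I would set $P := D_+ A^\dagger D_+$ and $Q := D_- A^\dagger D_-$. By hypothesis $P_{ij}\ge 0$ and $Q_{ij}\le 0$ for all $i,j$. Writing out the entries gives
\[
P_{ij}\,Q_{ij} \;=\; (D_{+,ii}D_{-,ii})(D_{+,jj}D_{-,jj})\,(A^\dagger_{ij})^{2} \;\le\; 0.
\]
Since the leading factor $(D_{+,ii}D_{-,ii})(D_{+,jj}D_{-,jj})$ is $\pm 1$, whenever $A^\dagger_{ij}\neq 0$ this product must be strictly negative, forcing $D_{+,ii}D_{+,jj} = -D_{-,ii}D_{-,jj}$ and therefore $P_{ij} = -Q_{ij}$. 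When $A^\dagger_{ij}=0$ both entries vanish, and the identity $D_+ A^\dagger D_+ = -D_- A^\dagger D_-$ follows.

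For (ii), I would introduce $D := D_+ D_- = D_- D_+$ (diagonal matrices commute), which is itself a $\pm 1$ signature matrix with $D^{-1} = D$. Multiplying the identity from (i) on both sides by $D_+$ and using $D_+^{2} = I$ yields
\[
A^\dagger \;=\; -(D_+ D_-)\,A^\dagger\,(D_- D_+) \;=\; -D\,A^\dagger\,D,
\]
i.e.\ $D\,A^\dagger\,D^{-1} = -A^\dagger$. Hence $A^\dagger$ and $-A^\dagger$ are similar, so $\sigma(A^\dagger) = -\sigma(A^\dagger)$ is symmetric about zero. Since the nonzero eigenvalues of $A$ are the reciprocals of those of $A^\dagger$ and $0$ is its own negative, the spectrum $\sigma(A)$ is also symmetric, i.e.\ $\lambda\in\sigma(A)$ iff $-\lambda\in\sigma(A)$, as claimed.
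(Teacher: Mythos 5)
Your proposal is correct and follows essentially the same route as the paper: part (i) by comparing the entries of $D_+A^\dagger D_+$ and $D_-A^\dagger D_-$, which share the absolute value $|A^\dagger_{ij}|$ but have opposite prescribed signs, and part (ii) via the identity $A^\dagger=-DA^\dagger D$ with $D=D_+D_-$ combined with the reciprocal correspondence between the nonzero spectra of $A$ and $A^\dagger$. The only cosmetic difference is that the paper exhibits the eigenvector $\tilde x=Dx$ explicitly where you invoke similarity of $A^\dagger$ and $-A^\dagger$; the content is identical.
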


\begin{proof}

Let $D_\pm$ be diagonal signature matrices such that $D_+ A ^{\dagger} D_+$ contains nonnegative elements and $D_- A^{\dagger} D_-$ contains nonpositive elements only. Since $(D_\pm A^{\dagger} D_\pm)_{ij} = (D_\pm)_{ii} (A^{\dagger})_{ij} (D_\pm)_{jj}$, we conclude that $(D_\pm A^{\dagger} D_\pm)_{ij}\not=0$ if and only if $(A^{\dagger})_{ij}\not=0$. As a consequence, we obtain $(D_+ A ^{\dagger} D_+)_{ij} = - (D_- A^{\dagger} D_-)_{ij}$ for each $i,j$, and the proof of the part i) follows. 

Suppose that $\lambda\in\sigma(A), \lambda\not=0$. Then $\mu=1/\lambda\in\sigma(A^\dagger)$, and there exists an eigenvector $x\not=0$ such that $A^\dagger x = \mu x$. As $D_+D_+= D_-D_- = I$, we obtain $A^\dagger = D_+ D_+ A^\dagger D_+ D_+ = -D_+ D_- A^\dagger D_- D_+ = -D A^\dagger D$ where $D=D_+ D_-=D_- D_+$ is a signature matrix (see the part i)). Then for the nontrivial vector $\tilde x= D x = D_+D_- x$ we obtain, $x=D\tilde x$, and
\[
A^\dagger \tilde x =  - D A^\dagger D \tilde x = - D A^\dagger x = - \mu D x = -\mu \tilde x.
\]
Hence $-\mu\in\sigma(A^\dagger)$. As a consequence, we obtain $-\lambda\in\sigma(A)$, as claimed in the part ii).
\end{proof}

\begin{remark}\label{remark-offdiagonal}
The converse statement to the part ii) is not true, in general. Indeed, let us consider the following $5\times 5$ block matrix  
\[
\mathscr{M} = \left(
\begin{array}{cc}
0 & K\\
K^T & 0
\end{array}
\right)
\quad \text{where} \quad K = \left(
\begin{array}{ccc}
1 & 1 & 0\\
0 & 1 & 1
\end{array}
\right). 
\]
The spectrum $\sigma(\mathscr{M}) = \{\pm\sqrt{3}, \pm 1, 0\}$ is symmetric. The matrix $\mathscr{M}$ represents an adjacency matrix of the bipartite path graph $\mathbb{P}_5$ shown in Fig.~\ref{fig-m56-bipartite-nonsignable} (left). It is a simple connected graph with smallest number of vertices with an adjacency matrix which is neither positively, nor negatively pseudo-invertible. 
\end{remark}

We say that an $N\times N$ matrix $\mathscr{M}$ is permutationally similar to an off-diagonal block matrix iff there exists a permutation matrix $P$ and an $n\times m$ real matrix $K$, $n+m=N$, such that 
\begin{equation}
P^T \mathscr{M} P = \left(
\begin{array}{cc}
0 & K\\
K^T & 0
\end{array}
\right).
\label{blockmatrix}
\end{equation}

\begin{theorem}\label{theo-simul}
Let $\mathscr{M}\in \R^{N\times N}$ be a real symmetric matrix.
\begin{itemize}
\item[i)] If $\mathscr{M}$ is a positive (negative) pseudo-invertible matrix which is permutationaly similar to a block off-diagonal matrix  (\ref{blockmatrix}), then $\mathscr{M}$ is also negative (positive) pseudo-invertible matrix. 

\item[ii)] If $\mathscr{M}$ is a positive and negative pseudo-invertible matrix, then $\mathscr{M}$ is permutationaly similar to a block off-diagonal matrix of the form (\ref{blockmatrix}). 

\item[iii)] An off-diagonal matrix $\mathscr{M}$ of the form (\ref{blockmatrix}) is positively and negatively pseudo-invertible if and only if  there are $n\times n$ and $m\times m$ signature diagonal matrices $D_A$ and $D_B$ such that the matrix $D_B K^\dagger D_A$ contains elements of the same sign. 

\end{itemize}

\end{theorem}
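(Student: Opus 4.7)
The plan is to handle the three parts in the order given, leveraging one common observation: a block off-diagonal matrix anti-commutes under conjugation by the diagonal signature $J=\mathrm{diag}(I_n,-I_m)$, i.e.\ $J\mathscr{M}J=-\mathscr{M}$, and this relation transfers to the pseudo-inverse via orthogonal similarity, giving $J\mathscr{M}^\dagger J=-\mathscr{M}^\dagger$. Moreover, any diagonal signature $D$ commutes with $J$.

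For part (i), I first reduce to the case that $\mathscr{M}$ is already off-diagonal. If $\mathscr{M}=P\mathscr{M}_0 P^T$ and $D$ signs $\mathscr{M}^\dagger$ to a nonnegative matrix, then $\tilde D=P^T D P$ is still a diagonal $\pm 1$ matrix (a permutation merely reorders the diagonal entries of $D$) and satisfies $\tilde D\mathscr{M}_0^\dagger\tilde D = P^T(D\mathscr{M}^\dagger D)P\ge 0$, so $\mathscr{M}_0$ is positively pseudo-invertible as well. Working in the off-diagonal form, take the new signature $D'=\tilde D J$, which is again a signature; using commutativity of diagonal matrices and the key relation above, I compute $D'\mathscr{M}_0^\dagger D' = \tilde D J\mathscr{M}_0^\dagger J\tilde D = \tilde D(J\mathscr{M}_0^\dagger J)\tilde D = -\tilde D\mathscr{M}_0^\dagger\tilde D\le 0$, yielding negative pseudo-invertibility of $\mathscr{M}_0$, which pulls back to $\mathscr{M}$. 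The reverse implication is symmetric.

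For part (ii), I invoke Proposition~\ref{symmetric}(i), which provides signatures $D_\pm$ satisfying $D_+\mathscr{M}^\dagger D_+=-D_-\mathscr{M}^\dagger D_-$. Setting $D=D_+D_-=D_-D_+$ (diagonal matrices commute) this rearranges to $\mathscr{M}^\dagger=-D\mathscr{M}^\dagger D$; taking Moore--Penrose pseudo-inverses of both sides, and using $D^{-1}=D$ together with $(\mathscr{M}^\dagger)^\dagger=\mathscr{M}$, one obtains $\mathscr{M}=-D\mathscr{M}D$. Entrywise this reads $\mathscr{M}_{ij}(1+D_{ii}D_{jj})=0$, so $\mathscr{M}_{ij}=0$ whenever $D_{ii}=D_{jj}$. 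Partitioning the index set into $I_+=\{i:D_{ii}=+1\}$ and $I_-=\{i:D_{ii}=-1\}$ and choosing the permutation that lists $I_+$ before $I_-$ presents $\mathscr{M}$ in the block off-diagonal form of (\ref{blockmatrix}).

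For part (iii), a short direct verification of the Moore--Penrose axioms (\ref{MP}) shows that the pseudo-inverse of an off-diagonal matrix is again off-diagonal, with $(2,1)$-block $K^\dagger$ and $(1,2)$-block $(K^\dagger)^T$; the verification uses only $KK^\dagger K=K$ together with the symmetry of $KK^\dagger$ and $K^\dagger K$ from (\ref{MPmn}). Any diagonal signature decomposes uniquely as $D=\mathrm{diag}(D_A,D_B)$, and block multiplication shows that the off-diagonal blocks of $D\mathscr{M}^\dagger D$ are exactly $D_B K^\dagger D_A$ and its transpose. Consequently $D\mathscr{M}^\dagger D\ge 0$ is equivalent to $D_B K^\dagger D_A\ge 0$, and $D\mathscr{M}^\dagger D\le 0$ to $D_B K^\dagger D_A\le 0$; since replacing $D_A$ by $-D_A$ interchanges these two conditions, the existence of any $D_A,D_B$ with $D_B K^\dagger D_A$ of uniform sign is equivalent to both positive and negative pseudo-invertibility, consistent with part (i). The main (mild) delicacy across the proof is keeping track of these two sign conventions and repeatedly exploiting that diagonal matrices commute; the computations themselves are routine.
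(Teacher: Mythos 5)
Your proof is correct and follows essentially the same route as the paper: part i) by flipping the sign of the lower signature block (your conjugation by $J=\mathrm{diag}(I_n,-I_m)$ is exactly the paper's passage from $D_+=\mathrm{diag}(D_A,D_B)$ to $D_-=\mathrm{diag}(D_A,-D_B)$), part ii) via Proposition~\ref{symmetric} and the relation $\mathscr{M}^\dagger=-D\mathscr{M}^\dagger D$ with $D=D_+D_-$, and part iii) via the block form (\ref{bipartite-matrix}) of the pseudo-inverse. The only (harmless) differences are that you carry out the permutation reduction in i) explicitly where the paper says ``without loss of generality,'' and in ii) you transfer the anti-commutation relation to $\mathscr{M}$ itself by taking pseudo-inverses and argue entrywise, whereas the paper reads off the vanishing diagonal blocks of $P^T\mathscr{M}^\dagger P$ first.
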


\color{black}

\begin{proof}
First, we note that for the Moore-Penrose inverse $\mathscr{M}^\dagger$ of the following  block symmetric matrix $\mathscr{M}$ we have the implication:
\begin{equation}
\mathscr{M} = \left(
\begin{array}{cc}
0 & K\\
K^T & 0
\end{array}
\right) \quad \Longrightarrow
\quad 
\mathscr{M}^\dagger = \left(
\begin{array}{cc}
0 & (K^\dagger)^T\\
K^\dagger & 0
\end{array}
\right),
\label{bipartite-matrix}
\end{equation}
where $K^\dagger$ is the uniquely determined Moore-Penrose inverse of the $n\times m$ matrix $K$. 

\color{black} In order to prove the part i),  \color{black}
let us assume there is an $N\times N$ permutation matrix $P$ such that $P^T \mathscr{M} P$ has the block form of (\ref{blockmatrix}). Without loss of generality, we may assume $P=I$ is the identical permutation of indices. Assume $\mathscr{M}$ is positively pseudo-invertible. We shall prove  that $\mathscr{M}$ is also negatively  pseudo-invertible, and vice versa. Assuming that $\mathscr{M}$ is positively pseudo-invertible, then there exists  a signature matrix $D_+ = diag(D_A, D_B)$ such that the matrix
\begin{eqnarray}
\label{DADB}
D_+ \mathscr{M}^\dagger D_+ &=&
\left(
\begin{array}{cc}
D_A & 0  \\
0 & D_B\\
\end{array}
\right)
\left(
\begin{array}{cc}
0 & (K^{\dagger})^T  \\
K^{\dagger} & 0\\
\end{array}
\right)
\left(
\begin{array}{cc}
D_A & 0  \\
0 & D_B\\
\end{array}
\right)
\\
&=&
\left(
\begin{array}{cc}
0 & D_A (K^{\dagger})^T D_B   \\
D_B K^{\dagger} D_A & 0\\
\end{array}
\right)\nonumber
\end{eqnarray}
contains nonnegative elements only. Here $D_A$ and $D_B$ are $n\times n$ and $m\times m$  signature diagonal matrices containing $\pm1$ elements only. Taking the signature diagonal matrix $D_- = diag(D_A, - D_B)$ the matrix 
\begin{equation}
D_- \mathscr{M}^{\dagger} D_- =
\left(
\begin{array}{cc}
0 & -D_A (K^{\dagger})^T D_B   \\
-D_B K^{\dagger} D_A & 0\\
\end{array}
\right)
\label{DBDA}
\end{equation}
contains nonpositive elements only. Hence the matrix $\mathscr{M}$ is also negatively pseudo-invertible, and vice versa.

\color{black} In order to prove the part ii),  \color{black}
suppose that $\mathscr{M}$ is positively and negatively pseudo-invertible matrix.  
\color{black}
With regard to Proposition~\ref{symmetric} we obtain 
\begin{equation}
 D_+ \mathscr{M}^{\dagger} D_+ = - D_- \mathscr{M}^{\dagger} D_- ,
 \label{Dpm}
\end{equation}
where $D_\pm$ are diagonal signature matrices. 
\color{black}
Then there exists an $N\times N$ permutation matrix $P$ such that
\[
 P^T D_+ D_- P =
\left(
\begin{array}{cc}
I_n & 0   \\
0 & -I_m\\
\end{array}
\right),\ \ i.\,e., 
\quad
D_+ D_- = D_- D_+ =
P \left(
\begin{array}{cc}
I_n & 0   \\
0 & -I_m\\
\end{array}
\right) P^T ,
\]
where $I_n$, $I_m$ are the $n\times n$ and $m\times m$ identity matrices, $n+m=N$. Here $n$ ($m$) is the number of positive (negative) units in the matrix $D_+ D_-$, respectively. It follows from (\ref{Dpm}) that
\[
\mathscr{M}^{\dagger} = - D_+ D_-  \mathscr{M}^{\dagger} D_- D_+ =
- P \left(
\begin{array}{cc}
I_n & 0   \\
0 & -I_m\\
\end{array}
\right)
P^T \mathscr{M}^{\dagger} P
\left(
\begin{array}{cc}
I_n & 0   \\
0 & -I_m\\
\end{array}
\right)
P^T.
\]
Since $P^T P = P P^T =I$, we have
\[
P^T \mathscr{M}^{\dagger} P
=
- \left(
\begin{array}{cc}
I_n & 0   \\
0 & -I_m\\
\end{array}
\right)
P^T \mathscr{M}^{\dagger} P
\left(
\begin{array}{cc}
I_n & 0   \\
0 & -I_m\\
\end{array}
\right)
\]
Writing $P^T \mathscr{M}^{\dagger} P$ in the form of a block matrix we obtain
\begin{eqnarray*}
 P^T \mathscr{M}^{\dagger} P \equiv
 \left(
\begin{array}{cc}
V & H   \\
H^T & W\\
\end{array}
\right)
&=&
- \left(
\begin{array}{cc}
I_n & 0   \\
0 & -I_m\\
\end{array}
\right)
 \left(
\begin{array}{cc}
V & H   \\
H^T & W\\
\end{array}
\right)
\left(
\begin{array}{cc}
I_n & 0   \\
0 & -I_m\\
\end{array}
\right)
\\
&=&
 \left(
\begin{array}{cc}
-V & H   \\
H^T & -W\\
\end{array}
\right).
\end{eqnarray*}
It means that $V=W=0$. Hence
\[
P^T \mathscr{M}^{\dagger} P =
\left(
\begin{array}{cc}
0 & H   \\
H^T & 0\\
\end{array}
\right)
\ \Longrightarrow \
P^T \mathscr{M} P =
\left(
\begin{array}{cc}
0 & K   \\
K^T & 0\\
\end{array}
\right).
\]
where $K=(H^T)^{\dagger}$. It means that the matrix $\mathscr{M}$ is permutationally similar to the block matrix (\ref{blockmatrix}), as claimed.

\color{black}
Finally, 
\color{black}
consider the diagonal $N\times N$ signature matrix $D=diag(D_B, D_A)$. Then 
\[
D \mathscr{M}^\dagger D
= \left(
\begin{array}{cc}
0 & D_A (K^\dagger)^T D_B\\
D_B K^\dagger D_A& 0
\end{array}
\right)
\]
contains elements of the same sign iff $D_B K^\dagger D_A$ contains elements of the same sign, and the proof of \color{black} the part iii) \color{black}  follows. 
\end{proof}

In the following theorem we present sufficient conditions for a general block symmetric matrix to be positively or negatively pseudo-invertible. 

\begin{theorem}\label{theo-si}
Let $A\in \R^{n\times n}$ and $B\in \R^{m\times m}$ be real symmetric matrices, $K\in \R^{n\times m}$. Let $S_A= A - K B^\dagger K^T$ be the Schur complement of $B$ in the block matrix $\mathscr{M}$ \color{black}in (\ref{blockM})\color{black}. Assume that $S_A$ and $B$ are both positively (negatively) pseudo-invertible  signable to a nonnegative (nonpositive) matrices by the signature matrices $D_A$ and $D_B$, respectively. Assume that the real matrix $K\in \R^{n\times m}$ is $(A,B)$ compatible and such that $D_A K D_B$ contains elements of the same signs. Then the block matrix $\mathscr{M}$ of the form (\ref{blockM}) is positively (negatively) pseudo-invertible. 
\end{theorem}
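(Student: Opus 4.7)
The plan is to start from the explicit Banachiewicz--Schur representation of $\mathscr{M}^\dagger$ given in equation (\ref{MA}), whose hypotheses are guaranteed by the $(A,B)$-compatibility of $K$, and then to exhibit a block-diagonal signature matrix $D=\mathrm{diag}(\varepsilon_1 D_A,\varepsilon_2 D_B)$ with $\varepsilon_1,\varepsilon_2\in\{\pm 1\}$ such that $D\mathscr{M}^\dagger D$ has entries of the sign required by Definition~\ref{def:pseudoinv-matrix}. The key algebraic observation is the idempotence $D_A D_A = I_n$ and $D_B D_B = I_m$, which lets me insert paired signature factors and rewrite, for instance,
\[
D_A S_A^\dagger K B^\dagger D_B = (D_A S_A^\dagger D_A)(D_A K D_B)(D_B B^\dagger D_B),
\]
so that every factor on the right has a known entrywise sign under the theorem's hypotheses. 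An analogous rewriting applies to the correction term appearing in the $(2,2)$-block, producing a product of five signed factors.

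With these factorizations in place, the proof reduces to block-by-block sign bookkeeping. Consider the positive case first. The $(1,1)$-block of $D\mathscr{M}^\dagger D$ equals $D_A S_A^\dagger D_A$, which is nonnegative by assumption. The $(2,2)$-block equals
\[
D_B B^\dagger D_B + (D_B B^\dagger D_B)(D_B K^T D_A)(D_A S_A^\dagger D_A)(D_A K D_B)(D_B B^\dagger D_B),
\]
where the first summand is nonnegative and the second is a product of five factors whose total sign parity is even regardless of the common sign of $D_A K D_B$ (either zero or two of the five factors are nonpositive), and hence is also nonnegative. Only the off-diagonal block, $-\varepsilon_1\varepsilon_2(D_A S_A^\dagger D_A)(D_A K D_B)(D_B B^\dagger D_B)$, depends on the sign choice. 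I therefore take $\varepsilon_1\varepsilon_2=-1$ when $D_A K D_B\ge 0$ and $\varepsilon_1\varepsilon_2=+1$ when $D_A K D_B\le 0$; in either subcase $D\mathscr{M}^\dagger D$ has nonnegative entries, witnessing that $\mathscr{M}$ is positively pseudo-invertible.

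For the negative case I follow the same template with $D_A S_A^\dagger D_A\le 0$ and $D_B B^\dagger D_B\le 0$. Here the second summand in the $(2,2)$-block has three guaranteed nonpositive factors among the five, to which either zero or two further nonpositive factors are contributed by $D_A K D_B$ and its transpose; the parity is odd in both subcases and the summand is nonpositive. The off-diagonal block is then adjusted by choosing $\varepsilon_1\varepsilon_2$ with the opposite convention compared to the positive case.

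I expect the main obstacle to be the interplay of sign choices across the three blocks, since a single signature matrix must simultaneously sign every block correctly. The construction works because the diagonal blocks are invariant under the individual signs $\varepsilon_i$ (they always appear as $\varepsilon_i^2=1$), while the off-diagonal block depends only on the product $\varepsilon_1\varepsilon_2$, leaving exactly one binary degree of freedom to match the sign ambiguity in $D_A K D_B$. No new Moore--Penrose identities need to be checked, so the theorem is essentially a statement about how the sign structure of the explicit formula (\ref{MA}) propagates under the assumed signing hypotheses.
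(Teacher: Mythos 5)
Your proposal is correct and follows essentially the same route as the paper's own proof: both start from the Banachiewicz--Schur form (\ref{MA}), insert $D_AD_A=I_n$, $D_BD_B=I_m$ to factor each block into sign-definite pieces, and use the one remaining degree of freedom (your $\varepsilon_1\varepsilon_2$, the paper's choice between $D=\mathrm{diag}(D_A,-D_B)$ and $D=\mathrm{diag}(D_A,D_B)$) to fix the sign of the off-diagonal block according to the sign of $D_AKD_B$. Your parity bookkeeping for the $(2,2)$-block and your treatment of the negative case are, if anything, spelled out more explicitly than in the paper.
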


\begin{proof}
First,  we assume both $S_A$ and $B$ are positively pseudo-invertible and the matrix $D_A K D_B$ is nonnegative. We consider the signature matrix $D=diag(D_A,-D_B)$. If  $D_A K D_B$ is nonpositive, then  we take $D=diag(D_A,D_B)$. In what follows, we shall prove that $\mathscr{M}^\dagger$ is signable to a nonnegative real matrix by the signature matrix $D$. We have
\[
D \mathscr{M}^\dagger D =
\left( 
\begin{array}{cc}
D_A S_A^\dagger D_A  &  D_A S_A^\dagger K  B^\dagger D_B \\
D_B  B^\dagger K^T  S_A^\dagger D_A  & D_B B^\dagger D_B + D_B B^\dagger K^T S_A^\dagger K B^\dagger D_B
\end{array}
\right) .
\]
Since $D_A D_A =I_n, D_B D_B = I_m$, we have
\begin{eqnarray*}
D_A S_A^\dagger K  B^\dagger D_B &=& (D_A S_A^\dagger D_A) (D_A K  D_B) (D_B B^\dagger D_B) \ge 0,
\\
D_B B^\dagger K^T S_A^\dagger K B^\dagger D_B
&=& (D_B B^\dagger D_B) (D_B K^T D_A) (D_A S_A^\dagger D_A) (D_A K D_B) (D_B B^\dagger D_B) \ge 0,
\end{eqnarray*}
are matrices with nonnegative elements only. Hence the matrix $\mathscr{M}$ is positively pseudo-invertible, as claimed. In the case $S_A$ and $B$ are both negatively pseudo-invertible the proof of negatively pseudo-invertibility of $\mathscr{M}$ is analogous. 
\end{proof}

In the rest of this section we apply the previous results for characterization of the least positive and largest negative eigenvalues of a block matrix (\ref{blockmatrix}). As usual, we let denote by $\preceq$ the L\"owner partial ordering on symmetric matrices, i.\,e.,  $A\preceq B$ iff the matrix $B-A$ is a positive semidefinite matrix, that is $B-A\succeq 0$. Following \cite{bova}, \cite{Cvetkovic2004}, and using the L\"owner ordering, the maximal positive and minimal negative eigenvalues of $\mathscr{M}^\dagger$ can be expressed as follows:
\[
0< \lambda_{max}(\mathscr{M}^\dagger) = \min_{\mathscr{M}^\dagger\preceq t I} t, \qquad 0 > \lambda_{min}(\mathscr{M}^\dagger) = \max_{ s I \preceq \mathscr{M}^\dagger} s .
\]

\color{black}
Suppose that $\mathscr{M}$ is a symmetric real matrix such that $\lambda_{min}(\mathscr{M}) <0<\lambda_{max}(\mathscr{M})$. With regard to (\ref{lambdaminmax}) we have $\lambda_+(\mathscr{M}) = \lambda_{max}(\mathscr{M}^\dagger)^{-1}, \lambda_-(\mathscr{M}) = \lambda_{min}(\mathscr{M}^\dagger)^{-1}$.
\color{black}
Introducing the new variables $\mu=1/t, \eta=-1/s$, we deduce that the \color{black}least \color{black}  positive and largest negative eigenvalues of the matrix  $\mathscr{M}$ can be expressed as follows:
\begin{equation}
 \lambda_+(\mathscr{M}) = \max_{\mu \mathscr{M}^\dagger\preceq I} \mu, \qquad \lambda_-(\mathscr{M}) = - \max_{- \eta \mathscr{M}^\dagger\preceq I} \eta.
\label{lambdapm}
\end{equation}

Let us denote by $\Lambda^{HL}_{sg}(\mathscr{M})= \lambda_+(\mathscr{M}) -  \lambda_-(\mathscr{M})$ and $\Lambda^{HL}_{ind}(\mathscr{M})= \max(|\lambda_+(\mathscr{M})|, |\lambda_-(\mathscr{M}|)$  the HOMO-LUMO spectral gap $\Lambda^{HL}_{sg}$ and the index $\Lambda^{HL}_{ind}$ of a symmetric matrix $\mathscr{M}$ representing  the difference between the least positive and largest negative eigenvalue of the matrix $\mathscr{M}$. In the context of the spectral graph theory the eigenvalues of the adjacency matrix representing an organic molecule play an important role. The spectral gap $\Lambda^{HL}_{sg}(\mathscr{M})$ is also referred to as the HOMO-LUMO energy separation gap of the energy of the highest occupied molecular orbital (HOMO) and the lowest unoccupied molecular orbital orbital (LUMO). The bigger the spectral gap is the molecule is more stable. Generalizing the results by Pavl\'{\i}kov\'a and \v{S}ev\v{c}ovi\v{c} \cite{Pavlikova2019-NACO, Pavlikova2016-CMMS}, the HOMO-LUMO spectral gap and index of the matrix $\mathscr{M}$ can be expressed in terms of the following nonlinear programming problem:
\begin{eqnarray}
\Lambda^{HL}_{sg}(\mathscr{M}) &=& \max_{\mu,\eta\ge0}  \quad \mu+\eta
\\
\Lambda^{HL}_{ind}(\mathscr{M}) &=& \max_{\mu,\eta\ge0}  \quad \max(\mu, \eta)
\\
&& \text{s.t.}\quad \mu \mathscr{M}^\dagger \preceq I,  \ \ -\eta \mathscr{M}^\dagger\preceq I\nonumber.
\label{homolumo}
\end{eqnarray}
Assume $\mathscr{M}$ has a block symmetric matrix of the form \color{black}(\ref{blockM})\color{black}. Then, for any $\mu\ge 0$, we have $\mu  \mathscr{M}^\dagger \preceq I$ if and only if 

\[
\mu \left(\begin{array}{cc}
S_A^\dagger  & 0 \\
0 & B^\dagger
\end{array}\right) = \mu (\mathscr{Q}^{-1})^T \mathscr{M}^\dagger \mathscr{Q}^{-1} \preceq (\mathscr{Q}^{-1})^T \mathscr{Q}^{-1} \quad\hbox{where}\ \  \mathscr{Q}^{-1}=\left(\begin{array}{cc}
I & K B^\dagger \\
0 & I
\end{array}\right).
\]
Therefore, 
\begin{equation}
\mu  \mathscr{M}^\dagger \preceq I 
\quad\Leftrightarrow \quad
\left(
\begin{array}{cc}
I - \mu S_A^\dagger &  K B^\dagger \\
B^\dagger  K^T &  I - \mu B^\dagger + B^\dagger K^T K  B^\dagger
\end{array}
\right) \succeq 0.
\label{ineqmu}
\end{equation}
Similarly, for $\eta\ge 0$, we have 
\begin{equation}
-\eta \mathscr{M}^\dagger \preceq I \quad\Leftrightarrow \quad
\left(
\begin{array}{cc}
I + \eta S_A^{\dagger} &  K B^{\dagger} \\
B^{\dagger} K^T &  I + \eta B^{\dagger} + B^{\dagger} K^T K B^{\dagger}
\end{array}
\right) \succeq 0.
\label{ineqeta}
\end{equation}
\color{black}
The semi-definite constraints appearing in the right-hand sides of (\ref{ineqmu}), and (\ref{ineqeta}) can be viewed as linear matrix inequalities. Such inequalities can be utilized in order to optimize the least positive and largest negative eigenvalues $\lambda_+(\mathscr{M})$, and $\lambda_-(\mathscr{M})$ of a block matrix $\mathscr{M}$ with respect to matrix elements $A,B$, and $K$, respectively (cf. \cite{Pavlikova2016-CMMS}, \cite{Pavlikova2019-NACO}).
\color{black}

\section{Applications of block matrix pseudo-inversion in the graph theory}

Inverse graphs are of interest in estimating the least positive eigenvalue in families of graphs, a task for which there appears to be lack of suitable bounds. However, if the graphs are invertible, then one can apply one of the (many) known upper bounds on largest eigenvalues of the inverse graphs instead (cf. \cite{Pavlikova1990, Pavlikova2015}). Properties of the spectra of inverse graphs can also be used to estimate the difference between the minimal positive and maximal negative eigenvalue (the so-called HOMO-LUMO gap) for structural models of chemical molecules, as it was done e.g. for graphene structures in \cite{YeKlein}.

Let $G$ be an undirected simple graph, possibly with multiple edges, and with a symmetric binary adjacency matrix $A$. Conversely, if $A$ is a binary symmetric matrix, then we will use the symbol $G_A$ to denote the graph with the adjacency matrix $A$. The spectrum $\sigma(G)$ of $G$ consists of eigenvalues (i.\,e.,  including multiplicities) of $A_G$ (cf. \cite{Cvetkovic1978, Cvetkovic1988}). If the spectrum does not contain zero, then the adjacency matrix $A$ is invertible. We begin with a definition of an integrally invertible graph.

\begin{definition}\label{def-integerinv}
A graph $G=G_A$ is said to be integrally invertible if the inverse $A^{-1}$ of its adjacency matrix exists and is integral.
\end{definition}

It is well known (cf.~\cite{KirklandAkb2007}) that a graph $G_A$ is integrally invertible if and only if $det(A)=\pm1$. Next,  we recall the classical concept graph positive invertibility introduced by Godsil in \cite{Godsil1985} which was extended by Pavl\'\i kov\'a and \v{S}ev\v{c}ovi\v{c} to negatively invertible graphs \cite{Pavlikova2016}. Note that, in such a case the inverse matrix $A^{-1}$ need not represent a graph as it may contain negative entries. To overcome this difficulty in defining the inverse graph we restrict our attention to those graphs whose adjacency matrix is positively or negatively (pseudo)invertible. Next, we extend the concept of positive/negative invertibility to graphs whose adjacency matrices are not invertible.

\begin{definition}\label{def:inv}
A  graph $G_A$ is called positively (negatively) integrally invertible  if $det(A)=\pm 1$, and $A^{-1}$ is signable to a nonnegative (nonpositive) integral matrix. If $D$ is the corresponding signature matrix, then the positive (negative) inverse graph $H=G_A^{-1}$ is defined by the adjacency matrix $A_H=D A^{-1} D$ ($A_H= - DA^{-1} D$).
\end{definition}

The concept of positive integral invertibility coincides with the original notion of integral invertibility introduced by Godsil \cite{Godsil1985}. Definition~\ref{def:inv} extends Godsil's concept to a larger class of integrally invertible graphs with inverses of adjacency matrices signable to nonpositive matrices. 

\begin{definition}\label{def:pseudoinv}
A  graph $G_A$ is called positively (negatively) pseudo-invertible if the Moore-Penrose pseudo-inverse matrix  $A^{\dagger}$ is 
\color{black}
signable to a nonnegative (nonpositive) matrix. 
\color{black}
If $D$ is the corresponding signature matrix, then the positively (negatively) pseudo-inverse graph $H=G_A^{\dagger}$ is defined by the weighted nonnegative adjacency matrix $A_H=D A^{\dagger} D$ ($A_H= - DA^{\dagger} D$).
\end{definition}

Note that the positively (negatively) pseudo-inverse graph $H^\dagger$ is defined by the nonnegative weighted adjacency matrix $D A^\dagger D$ ($-D A^\dagger D$). The matrix $(D A^\dagger D)^\dagger$ is signable by the same signature matrix, and $D (D A^\dagger D)^\dagger D = D D (A^\dagger)^\dagger D D = A$. One can proceed similarly if $G$ is a negatively pseudo-invertible graph. As a consequence, we obtain
\[
(G^\dagger)^\dagger = G.
\]
Furthermore, it is easy to verify by a simple contradiction argument that the weighted pseudo-inverse graph $G^\dagger_A$ is connected provided that the original graph $G_A$ is connected.

In Table~\ref{tab-1} we show the number of all connected graphs with $m\le10$ vertices, number of invertible, and number of integrally/positively/negatively invertible graphs. We employed McKay's list of connected graphs available from  \url{http://users.cecs.anu.edu.au/~bdm/data/graphs.html}. The complete list of positively, negatively pseudo-invertible and pseudo-invertible graphs on $m\le 10$ including their spectrum and signature matrices can be found at: \url{http://www.iam.fmph.uniba.sk/institute/sevcovic/inversegraphs/} (see \cite{Pavlikova2022-WWW}).

\begin{table}
\small
\caption{\small Number of invertible ($det(A)\not=0$) and integrally invertible ($det(A)=\pm 1$) simple connected graphs $G_A$ with $m\le 10$ vertices. The number of positively but not negatively invertible graphs ($+$signable). The number of negatively  but not positively  pseudo-invertible graphs ($-$signable). The number of  positively and negatively invertible and pseudo-invertible graphs ($\pm$signable).
}
 \label{tab-1}
\scriptsize
\hglue-0.5truecm\begin{tabular}{r||r|r|r|r|r|r|r|r|r}
                 &$m=2$&$m=3$&$m=4$&$m=5$&$m=6$&$m=7$&$m=8$  & $m=9$  & $m=10$  \\
\hline
all conn. graphs       & 1   & 2   & 6   & 21  & 112 & 853 & 11117 & 261080 &11716571 \\
\hline
\multicolumn{10}{c}{Invertible graphs} \\
\hline
$det(A)\not=0$   & 1   & 1   & 3   &  8  & 52  & 342 &  5724 & 141063 & 7860195 \\
\hline
\multicolumn{10}{c}{Integrally invertible graphs} \\
\hline
int. invertible      & 1   & -   & 2   &  -  & 29  &   - &  2381 & -      & 1940904 \\
$+$signable          & 0   & -   & 1   &  -  & 20  &   - &  1601 & -      & 1073991 \\
$-$signable          & 0   & -   & 0   &  -  &  4  &   - &  235  & -      & 105363  \\
$\pm$signable        & 1   & -   & 1   &  -  &  4  &   - &  25   & -      & 349     \\
\hline
\multicolumn{10}{c}{Signable pseudo-invertible graphs} \\
\hline
$+$signable          & 0   & 0   & 1   &  3  & 27  & 111 &  2001 & 15310  & 1247128 \\
$-$signable          & 0   & 0   & 0   &  1  &  7  &  60 &   638 & 11643  & 376137  \\
$\pm$signable        & 1   & 1   & 3   &  4  & 13  &  25 &    93 &   270  & 1243     \\
\hline

\end{tabular}

\smallskip
Source: own computations \cite{Pavlikova2022-WWW}, \url{http://www.iam.fmph.uniba.sk/institute/sevcovic/inversegraphs/}
\end{table}

\begin{theorem}\label{theo-bipartite}
Let $G_\mathscr{M}$ be a simple connected graph.
\color{black}
\begin{itemize}
    \item [i)] If $G_\mathscr{M}$ is a bipartite graph which is positively (negatively) pseudo-invertible, then it is also negatively (positively) pseudo-invertible. 
    \item[ii)] If $G_\mathscr{M}$ is a positively and negatively pseudo-invertible graph, then it is a bipartite graph.
\end{itemize}
\color{black}
\end{theorem}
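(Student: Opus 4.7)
The plan is to reduce both parts of the theorem to the matrix-level Theorem~\ref{theo-simul} via the classical correspondence between bipartite graphs and block off-diagonal adjacency matrices. Recall that a simple graph $G_\mathscr{M}$ is bipartite if and only if its vertex set can be partitioned into two subsets $V_1$ and $V_2$ (of sizes $n$ and $m$, with $n+m=N$) such that every edge has one endpoint in $V_1$ and the other in $V_2$; equivalently, there exists a permutation matrix $P$ of the vertices (first listing $V_1$, then $V_2$) such that
\[
P^T \mathscr{M} P = \begin{pmatrix} 0 & K \\ K^T & 0 \end{pmatrix},
\]
where $K\in\{0,1\}^{n\times m}$ is the biadjacency matrix of $G_\mathscr{M}$. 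This is exactly the block off-diagonal form (\ref{blockmatrix}) appearing in Theorem~\ref{theo-simul}.

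For part i), I would invoke this correspondence to observe that if $G_\mathscr{M}$ is bipartite, then $\mathscr{M}$ is permutationally similar to a matrix of the form (\ref{blockmatrix}). Assuming $G_\mathscr{M}$ is positively pseudo-invertible means that $\mathscr{M}^\dagger$ is signable to a nonnegative matrix by some signature matrix. Theorem~\ref{theo-simul}~i) then yields immediately that $\mathscr{M}$ is also negatively pseudo-invertible, and, by Definition~\ref{def:pseudoinv}, so is the graph $G_\mathscr{M}$. The reverse implication (negative $\Rightarrow$ positive) is obtained in the same way.

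For part ii), I would apply Theorem~\ref{theo-simul}~ii) directly: positive and negative pseudo-invertibility of $\mathscr{M}$ yields a permutation matrix $P$ and an $n\times m$ real matrix $K$ such that $P^T \mathscr{M} P$ has the block off-diagonal form (\ref{blockmatrix}). Since $\mathscr{M}$ is a $\{0,1\}$-adjacency matrix, so is $P^T \mathscr{M} P$, and the fact that its diagonal blocks vanish means that no edge of $G_\mathscr{M}$ joins two vertices lying in the same part of the partition induced by $P$. Thus $G_\mathscr{M}$ is bipartite, as required.

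The main (minor) subtlety is the permutational-similarity-versus-bipartite dictionary in the graph-theoretic direction of part~ii): I must point out explicitly that a permutation of vertex labels does not change the graph up to isomorphism, so the resulting block off-diagonal adjacency matrix actually exhibits a bipartition of $G_\mathscr{M}$. Aside from this step the proof is a routine translation of Theorem~\ref{theo-simul} into graph-theoretic language.
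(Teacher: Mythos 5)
Your proposal is correct and follows essentially the same route as the paper: both parts are obtained by translating bipartiteness into permutational similarity to the block off-diagonal form (\ref{blockmatrix}) and then invoking Theorem~\ref{theo-simul}~i) and ii), respectively. The extra care you take with the permutation-versus-bipartition dictionary in part~ii) is a welcome but minor elaboration of what the paper leaves implicit.
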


\begin{proof}
\color{black}
The part i) is a consequence of Theorem~\ref{theo-simul}, i).  Indeed, the adjacency matrix $\mathscr{M}$  of a bipartite graph $G_\mathscr{M}$ is permutationaly similar to a block off-diagonal matrix  (\ref{blockmatrix}). 
With regard to Theorem~\ref{theo-simul}, i), if $\mathscr{M}$ is positive (negative) pseudo-invertible, then it also negative (positive) pseudo-invertible matrix, as claimed in the part i). 

If the adjacency matrix $\mathscr{M}$ is positive and negative pseudo-invertible, then $\mathscr{M}$ is permutationaly similar to a block off-diagonal matrix of the form (\ref{blockmatrix}) (see Theorem~\ref{theo-simul}, ii)), and the proof of the theorem follows. 
\color{black}
\end{proof}

\begin{remark}\label{remark3}
\color{black}
In general,  bipartitness of a graph does not imply its positive and negative (pseudo)invertibility. 
\color{black}
A smallest example is the path $\mathbb{P}_5$ (shown in Fig.~\ref{fig-m56-bipartite-nonsignable} (left)) which is a bipartite graph with the adjacency matrix of the form (\ref{blockmatrix}). It is neither positively nor negatively pseudo-invertible, and its spectrum is symmetric $\sigma(\mathbb{P}_5)=\{0, \pm 1, \pm  \sqrt{3}\}$ \color{black}(see also Remark~\ref{remark-offdiagonal}). \color{black}  In Fig.~\ref{fig-m56-bipartite-nonsignable} (middle) we show a bipartite graph $G_A$ on 6 vertices with symmetric spectrum $\sigma(A)=\{0,0, \pm 1.1756, \pm 1.9021\}$. Again it is neither positively nor negatively pseudo-invertible. In Fig.~\ref{fig-m56-bipartite-nonsignable} (right) we show a bipartite graph $G_A$ on 8 vertices which is integrally invertible with a symmetric spectrum $\sigma(A)=\{ \pm 2.5231, \pm 1.4413, \pm 0.5669, \pm 0.4851 \}$. This is a smallest example of an integrally invertible  bipartite graph which is neither positively nor negatively invertible. 
\end{remark}

\begin{figure}
    \centering
    
    \includegraphics[width=.3\textwidth]{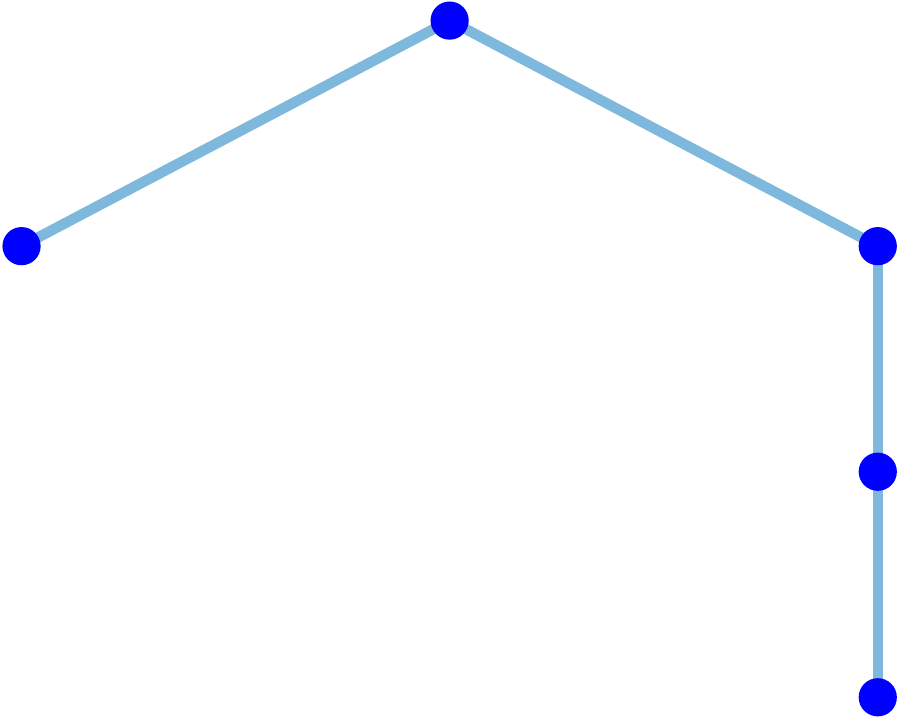}
    \quad
    \includegraphics[width=.3\textwidth]{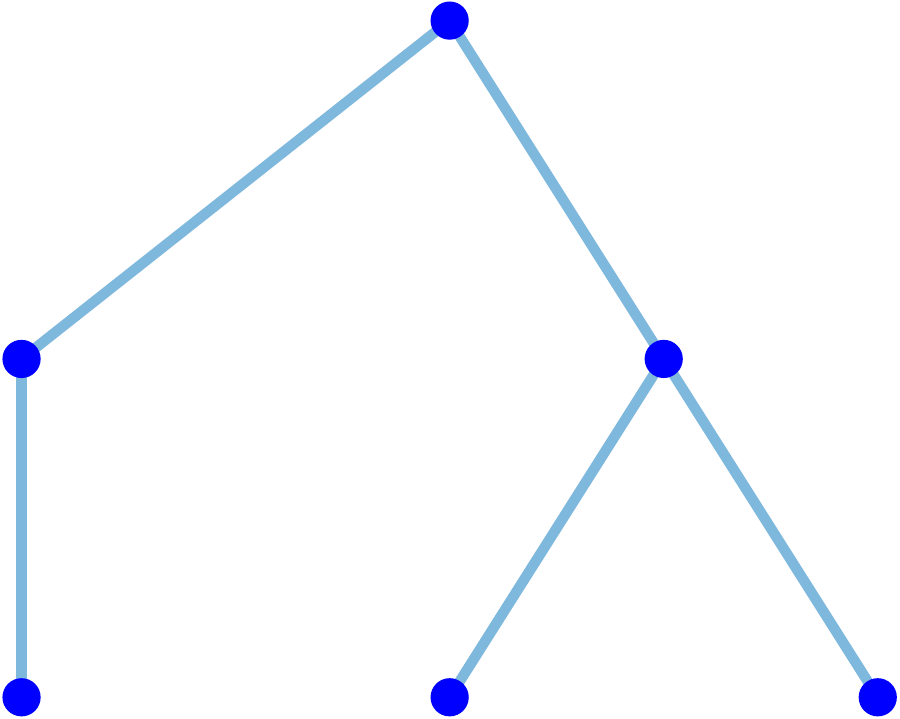}
    \quad
    \includegraphics[width=.3\textwidth]{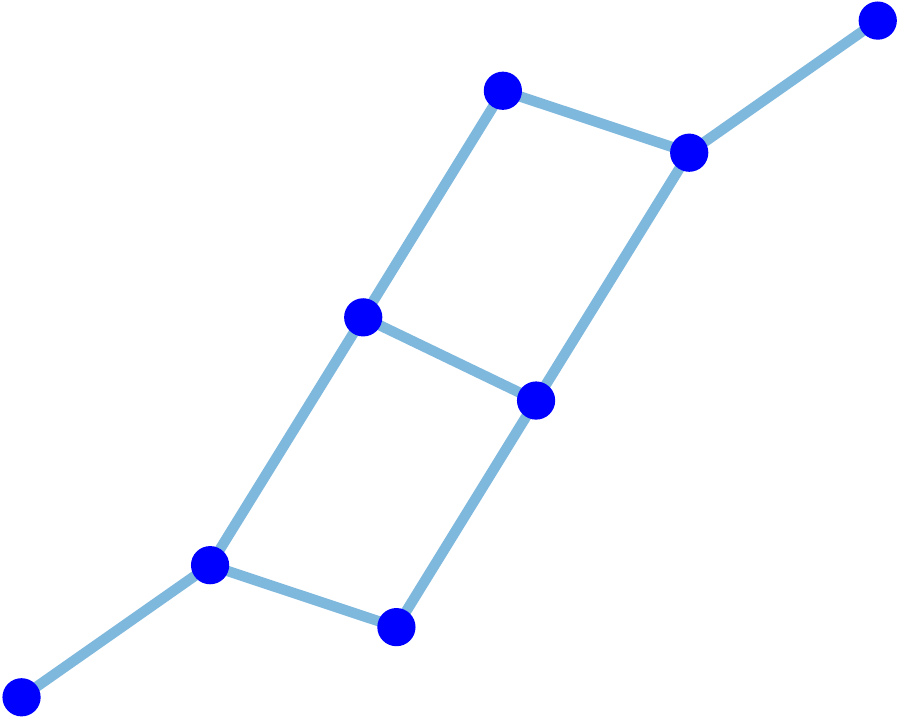}

    \caption{\small Examples of bipartite graphs which are neither positively nor negatively (pseudo)invertible.}
    \label{fig-m56-bipartite-nonsignable}
\end{figure}

\begin{example}\label{stargraph}
Let us denote $\mathbb{S}_{n+1}$ the bipartite star graph on $n+1$ vertices (see example in Fig.~\ref{fig-m5-pseudoinv} (right) for $n+1=6$). Its adjacency matrix $\mathscr{M}$ has the block form (\ref{blockmatrix}) with $K=(1, \ldots, 1)^T\in \mathbb{R}^n$. It is easy to verify that $K^\dagger =\frac{1}{n} K^T$. According to 
\color{black} Theorem~\ref{theo-simul}, iii), \color{black} $\mathbb{S}_{n+1}$ is positively and negatively pseudo-invertible. The adjacency matrix $\mathbb{S}_{n+1}^\dagger$ is equal to  $\frac{1}{n}\mathscr{M}$, and its spectrum $\sigma(\mathbb{S}_{n+1}) =\{ 0, \ldots, 0, \pm \sqrt{n}\}$.
\end{example}

\begin{remark}
According to the Theorem 8.8.2 due to Godsil and Royle \cite{Godsil2001} (see also Brouwer and Hamers \cite[Proposition 3.4.1]{Brouwer2012}) a graph is bipartite if and only if its spectrum is symmetric. In  Remark~\ref{remark3} we showed the the bipartitness (or symmetry of the spectrum) does not imply positively or negatively (pseudo)invertibility.
\end{remark}

\color{black}
If $A$ is an $m\times n$ matrix and $B$ is a $p\times q$ matrix, then the Kronecker product $A\otimes B$ is the $pm\times qn$ block matrix with blocks $(a_{ij} B)_{i=1,...,m,\  j=1,...,n}$. 
\color{black}

\begin{proposition}\label{borderedmatrix}
Assume $B\in \R^{m\times m}$ is a symmetric nonsingular matrix. Let us consider the block matrix
\begin{equation}
\mathscr{M} = \left(
\begin{array}{cc}
0 & K\\
K^T & B
\end{array}
\right),
\label{blockmatrixGk}
\end{equation}
where $K=(I, \ldots, I)^T \in \R^{km\times m}$, $I$ is the $m\times m$ identity matrix, and $0$ is the $n\times n$ zero matrix, $n=k m$ where $k\in\mathbb{N}$. Then the spectrum consists of real eigenvalues:
\[
\sigma(\mathscr{M}) = \{ \lambda\in \mathbb{R}, \ \lambda =0, \ \text{or}\ \ \lambda=(\mu\pm \sqrt{\mu^2 +4 k})/2, \ \text{for some}\ \mu\in\sigma(B) \}. 
\]
Furthermore, the spectrum $\sigma(\mathscr{M})$ is symmetric if and only if the spectrum $\sigma(B)$ is symmetric. Finally, 
\begin{equation}
\mathscr{M}^\dagger  = \frac{1}{k^2}
\left(
\begin{array}{cc}
- {\bf 1} \otimes B & k K\\
k K^T & 0
\end{array}
\right)
\label{blockmatrixGkinv}
\end{equation}
where $\otimes$ denotes the Kronecker product of matrices where ${\bf 1}$ is the $k\times k$ matrix consisting of ones. The matrix $\mathscr{M}$ is negatively pseudo-invertible.  If there exists an $m\times m$ signature matrix $D_-$ such that $D_- B D_- \le 0$, then $\mathscr{M}$ is also positively pseudo-invertible. 
\end{proposition}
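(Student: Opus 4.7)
The plan is to handle the four assertions—the spectrum, its symmetry, the explicit formula for $\mathscr{M}^\dagger$, and the signability statements—in that order, built on the three elementary identities
\[
K^T K = k\,I_m, \qquad K K^T = \mathbf{1}_{k\times k}\otimes I_m, \qquad (\mathbf{1}_{k\times k}\otimes I_m)K=k K,
\]
which follow immediately from $K$ being the vertical stack of $k$ copies of $I_m$.

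For the spectrum I would look for eigenvectors of the ansatz shape $(K v,\alpha v)^T$ with $v\in\R^m$. Using $K^T K=k\,I_m$, the block eigenvalue equation splits into $\alpha K v=\lambda K v$, which forces $\alpha=\lambda$ since $K$ has full column rank, together with $k v+\lambda B v=\lambda^2 v$, i.e., $B v=((\lambda^2-k)/\lambda)v$. Setting $\mu=(\lambda^2-k)/\lambda\in\sigma(B)$ yields the quadratic $\lambda^2-\mu\lambda-k=0$ with the two claimed roots, and because its discriminant $\mu^2+4k$ is strictly positive and its product of roots is $-k\neq 0$, this accounts for $2m$ nonzero eigenvalues of $\mathscr{M}$. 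The remaining $(k-1)m$ eigenvalues are zero, realized on the $(k-1)m$-dimensional subspace $\ker K^T\times\{0\}$, so the full count $(k+1)m=n+m$ is exhausted. The symmetry assertion then drops out of Vieta's formulas applied to the quadratic: since $\lambda_+(\mu)+\lambda_-(\mu)=\mu$ and $\lambda_+(\mu)\lambda_-(\mu)=-k$, we have $\{-\lambda_\pm(\mu)\}=\{\lambda_\pm(-\mu)\}$, whence $\sigma(\mathscr{M})$ is symmetric iff $\sigma(B)$ is.

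For the pseudo-inverse formula I would verify the four Moore--Penrose axioms by direct block multiplication. Two routine computations, invoking all three identities above, give
\[
\mathscr{M}\mathscr{M}^\dagger=\mathscr{M}^\dagger\mathscr{M}=\begin{pmatrix}\tfrac{1}{k}\mathbf{1}\otimes I_m & 0 \\ 0 & I_m\end{pmatrix},
\]
which is manifestly symmetric; the remaining axioms $\mathscr{M}\mathscr{M}^\dagger\mathscr{M}=\mathscr{M}$ and $\mathscr{M}^\dagger\mathscr{M}\mathscr{M}^\dagger=\mathscr{M}^\dagger$ then follow from one further application of $(\mathbf{1}\otimes I_m)K=k K$. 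The delicate moment is the cross-cancellation $-K B K^T K+k K B=-k K B+k K B=0$, which is what pins down the coefficients $-1/k^2$ and $k/k^2$ in the stated formula.

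Finally, for signability I would take a signature of the shape $D=\operatorname{diag}(D_0,\ldots,D_0,\varepsilon D_0)$ with $k$ leading copies of an $m\times m$ signature $D_0$ and a trailing $\varepsilon D_0$, $\varepsilon\in\{\pm 1\}$. A direct calculation using $D_1 K=(D_0,\ldots,D_0)^T$ and $D_0 D_0=I_m$ yields
\[
D\mathscr{M}^\dagger D=\frac{1}{k^2}\begin{pmatrix}-\mathbf{1}\otimes(D_0 B D_0) & \varepsilon k K \\ \varepsilon k K^T & 0\end{pmatrix}.
\]
Choosing $\varepsilon=-1$ together with a $D_0$ such that $D_0 B D_0\ge 0$ makes every entry nonpositive, proving negative pseudo-invertibility of $\mathscr{M}$; choosing $\varepsilon=+1$ together with the hypothesized $D_-$ satisfying $D_- B D_-\le 0$ makes every entry nonnegative, proving positive pseudo-invertibility. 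The main obstacle I anticipate is the bookkeeping in the Moore--Penrose verification: keeping the Kronecker-product shape of the $(1,1)$ block straight while ensuring the cross-terms vanish exactly requires attention to the order in which the three governing identities are applied.
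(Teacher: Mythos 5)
Your argument is correct and arrives at the same formulas, but the derivation of $\mathscr{M}^\dagger$ takes a genuinely different route from the paper's. The paper computes the generalized Schur complement $S_A=-KB^{-1}K^T=-{\bf 1}\otimes B^{-1}$, checks that $S_A^\dagger=-\frac{1}{k^2}{\bf 1}\otimes B$ and that $K$ is $(0,B)$ compatible (i.e. $(I-S_AS_A^\dagger)K=0$ and $K(I-B^{-1}B)=0$), and then reads off $\mathscr{M}^\dagger$ from the Banachiewicz--Schur formula (\ref{MA}) established in Section~2; you instead posit the answer and verify the Moore--Penrose axioms (\ref{MP}) directly by block multiplication, using $K^TK=kI$, $KK^T={\bf 1}\otimes I$ and $({\bf 1}\otimes I)K=kK$. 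Your verification is self-contained and the cross-cancellation $-KBK^TK+kKB=0$ you isolate is indeed the decisive step; the paper's route buys the compatibility conditions explicitly, which it reuses in the surrounding results. On the spectrum your eigenvector ansatz $(Kv,\lambda v)^T$ together with the kernel $\ker K^T\times\{0\}$ and the dimension count $2m+(k-1)m=n+m$ fills in details the paper dismisses as ``easy to verify''; the relation $\mu=\lambda-k/\lambda$ and the Vieta argument for symmetry are the same.

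One caveat, which you share with the paper rather than introduce: in the signability step you choose ``a $D_0$ such that $D_0BD_0\ge 0$'' to conclude negative pseudo-invertibility, but such a signature matrix need not exist for an arbitrary symmetric nonsingular $B$ (signability of $B$ to a nonnegative matrix is a genuine restriction). The paper's choice $D=\operatorname{diag}(I,\ldots,I,-I)$ likewise yields a nonpositive matrix only when $B\ge 0$ entrywise, which holds in the intended application where $B$ is an adjacency matrix. It would strengthen your write-up to state this hypothesis explicitly (taking $D_0=I$ when $B\ge0$) rather than presenting the existence of $D_0$ as automatic.
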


\begin{proof}
It is easy to verify that $\lambda$ is a nonzero eigenvalue of $\mathscr{M}$ iff $\mu=\lambda - k/\lambda$ is an eigenvalue of $B$. Hence the spectrum $\sigma(\mathscr{M})$ is symmetric if and only if the spectrum $\sigma(B)$ is symmetric, and all nonzero eigenvalues of  $\mathscr{M}$ are given by $\lambda=(\mu\pm \sqrt{\mu^2 +4 k})/2$ for some  $\mu\in\sigma(B)$. For the Schur complement $S_A$ we have
\[
S_A = A - K B^{-1} K^T = - K B^{-1} K^T = - {\bf 1} \otimes B^{-1}, 
\]
where $\bf 1$ is the $k \times k$ matrix consisting of units. For example, if $k=2$, then  we have
\[
S_A = A - K B^{-1} K^T = 
- \left(
\begin{array}{cc}
B^{-1} & B^{-1}\\
B^{-1} & B^{-1}
\end{array}
\right) .
\]
Then it is easy to verify that 
\[
S_A^\dagger = - \frac{1}{k^2} {\bf 1} \otimes B \quad \text{and} \quad S_A S_A^\dagger = \frac{1}{k}  {\bf 1} \otimes I .
\]
Therefore, $(I- S_A S_A^\dagger) K = 0_{n\times m}$ and $ K(I-B^{-1} B) =0$, i.\,e.,  $K$ is a $(0,B)$ compatible matrix. Furthermore, $S_A^\dagger K B^{-1}= -\frac{1}{k} K$, and $B^{-1} + B^{-1} K^T S_A^\dagger K B^{-1} = B^{-1} -\frac{1}{k} B^{-1} K^T K = B^{-1} - B^{-1} =0$. Now, it follows from (\ref{MA}) that $\mathscr{M}^\dagger$ is given by (\ref{blockmatrixGkinv}). In particular, if $k=2$, then
\[
\mathscr{M}^\dagger  = \frac{1}{4}
\left(
\begin{array}{ccc}
- B  & - B & 2 I\\
- B  & - B & 2 I\\
2 I & 2 I  & 0
\end{array}
\right) .
\]
Hence $\mathscr{M}^\dagger$ is signable to nonpositive matrix by the signature matrix $D=diag(I, \ldots, I, -I)$. Finally, if there exists an $m\times m$ signature matrix $D_-$ such that $D_- B D_- \le 0$, then $\mathscr{M}$ is signable to nonnegative matrix by the signature matrix $D=diag(D_-, \ldots, D_-, D_-)$, as claimed.
\end{proof}

\begin{remark}
$G_B$ is a bipartite graph iff there exists a signature matrix $D_-$ such that $D_-B  D_- \le 0$. Indeed, if we set $(D_-)_{ii} =1$ for a vertex $i$ belongs to first bipartition, and $(D_-)_{jj} = -1$ for a vertex $j$ belongs to the second bipartition, then $D_-B D_- \le 0$. 
\color{black}
On the other hand, if there exists a signature matrix $D_-$ such that $D_-B  D_- \le 0$, then there exists a bipartition of $G_B$, each of two bipartions consisting of vertices with the same sign of $D_-$. 
\color{black}
\end{remark}

\begin{proposition}\label{graphpendantvertices}
Assume $G_B$ is a graph on $m$ vertices whose adjacency matrix $B$ is invertible. Let us denote by $G_B^{k,1}$ the graph which is constructed from $G_B$ by adding $k\in\mathbb{N}$ pendant vertices to each vertex of $G_B$. Then $G_B^{k,1}$ is negatively pseudo-invertible. If there exist an $m\times m$ signature matrix $D_-$ such that $D_- B D_- \le 0$, then $G_B^{k,1}$ is also positively pseudo-invertible graph.
\end{proposition}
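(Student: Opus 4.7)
The plan is to recognize that the adjacency matrix of $G_B^{k,1}$ fits exactly into the block form treated by Proposition~\ref{borderedmatrix}, and then simply invoke that result.

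First, I would order the vertices of $G_B^{k,1}$ so that the $km$ new pendant vertices come first (grouped into $k$ copies of the $m$ original vertices), followed by the $m$ original vertices of $G_B$. Under this ordering, the pendant vertices are pairwise non-adjacent (contributing the $km\times km$ zero block), each pendant vertex is adjacent to exactly one original vertex in the copy it belongs to (contributing an identity block $I\in \R^{m\times m}$ per copy), and the remaining $m\times m$ block is just the adjacency matrix $B$ of $G_B$. Hence the adjacency matrix $\mathscr{M}$ of $G_B^{k,1}$ has precisely the form
\[
\mathscr{M}=\left(
\begin{array}{cc}
0 & K\\
K^T & B
\end{array}
\right),\qquad K = (I,\ldots,I)^T\in \R^{km\times m},
\]
which coincides with the matrix (\ref{blockmatrixGk}) from Proposition~\ref{borderedmatrix}.

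Since $B$ is assumed invertible, the hypothesis of Proposition~\ref{borderedmatrix} is satisfied. Applying its conclusion directly yields both that $\mathscr{M}$ is negatively pseudo-invertible (so $G_B^{k,1}$ is negatively pseudo-invertible) and that whenever there exists an $m\times m$ signature matrix $D_-$ with $D_- B D_- \le 0$, the matrix $\mathscr{M}$ is additionally positively pseudo-invertible. This gives the claim at once. I do not anticipate a real obstacle here, as the entire content of the statement is already encoded in Proposition~\ref{borderedmatrix}; the only step requiring care is the verification that the pendant-vertex construction produces exactly the block matrix (\ref{blockmatrixGk}), which is a direct combinatorial check on the adjacency pattern.
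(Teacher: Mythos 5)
Your proposal matches the paper's proof exactly: the paper likewise observes that the adjacency matrix of $G_B^{k,1}$ has the block form (\ref{blockmatrixGk}) and then cites Proposition~\ref{borderedmatrix} for both conclusions. Your explicit description of the vertex ordering that produces the block structure is a welcome elaboration of a step the paper leaves implicit, but the argument is the same.
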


\begin{proof}
The adjacency matrix $\mathscr{M}$ of $G_B^{k,1}$ has the form (\ref{blockmatrixGkinv}). The rest of the proof follows from Proposition~\ref{borderedmatrix}. 
\end{proof}

\color{black}
The following result is a consequence of Proposition~\ref{graphpendantvertices} for the case of $k=1$ pendant vertex added to vertices of $G_B$. It is a generalization of \cite[Theorem 3.38]{Bapat2014} stating that, if a tree is invertible, then it is isomorphic to itself iff it is a corona tree obtained from a tree $G_B$ by adding a pendant vertex to every vertex of $G_B$.
\color{black}

\begin{corollary}
Assume $G_B$ is a graph on $m$ vertices with an adjacency matrix $B$. Then the graph $G^{1,1}_B$ is negatively self pseudo-invertible, i.\,e.,  $(G^{1,1}_B)^\dagger \cong G^{1,1}_B$. \color{black} If $G_B$ is a bipartite graph, then $G^{1,1}_B$ is also positively self pseudo-invertible.
\color{black}

\end{corollary}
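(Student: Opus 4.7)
The plan is to specialize Proposition~\ref{borderedmatrix} to $k=1$, observe that in this case the resulting block matrix is \emph{always} invertible (without any assumption on $B$), and then exhibit explicit signature matrices that witness the claimed self pseudo-inversion up to graph isomorphism.

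First I would write the adjacency matrix of $G^{1,1}_B$ in the block form
\[
\mathscr{M} = \begin{pmatrix} 0 & I \\ I & B \end{pmatrix},
\]
which is the $k=1$ instance of (\ref{blockmatrixGk}). Direct block multiplication shows
\[
\mathscr{M} \begin{pmatrix} -B & I \\ I & 0 \end{pmatrix} = \begin{pmatrix} I & 0 \\ 0 & I \end{pmatrix},
\]
so $\mathscr{M}$ is invertible \emph{regardless} of whether $B$ is invertible, and $\mathscr{M}^{\dagger} = \mathscr{M}^{-1} = \begin{pmatrix} -B & I \\ I & 0 \end{pmatrix}$. This removes the invertibility hypothesis that was present in Proposition~\ref{borderedmatrix}.

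For the negative self pseudo-inversion, I would pick the signature matrix $D = \operatorname{diag}(I_m, -I_m)$. A routine computation gives
\[
D\mathscr{M}^{\dagger} D = \begin{pmatrix} -B & -I \\ -I & 0 \end{pmatrix},
\]
which is nonpositive because $B$ is a $\{0,1\}$-matrix. By Definition~\ref{def:pseudoinv}, the adjacency matrix of the negative pseudo-inverse graph is $-D\mathscr{M}^{\dagger} D = \begin{pmatrix} B & I \\ I & 0 \end{pmatrix}$, which is permutation-similar to $\mathscr{M}$ via the swap of the two $m$-blocks. Hence $(G^{1,1}_B)^{\dagger} \cong G^{1,1}_B$.

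For the bipartite case, I would use the bipartition-signature matrix $D_-$ (with $+1$ on one part and $-1$ on the other), which satisfies $D_- B D_- = -B$. Choosing $D = \operatorname{diag}(D_-, D_-)$ and using $D_-^2 = I$ yields
\[
D\mathscr{M}^{\dagger} D = \begin{pmatrix} -D_- B D_- & D_-^2 \\ D_-^2 & 0 \end{pmatrix} = \begin{pmatrix} B & I \\ I & 0 \end{pmatrix} \ge 0,
\]
so $\mathscr{M}$ is positively pseudo-invertible and its positive pseudo-inverse graph has adjacency matrix permutation-similar to $\mathscr{M}$ itself. The only subtlety worth flagging is the identification $D_-BD_- = -B$, which relies on the characterization of bipartiteness recalled in the remark immediately preceding the corollary; everything else is bookkeeping.
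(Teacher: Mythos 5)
Your proof is correct, and its core computation is the same one the paper relies on: the paper disposes of this corollary by invoking Proposition~\ref{graphpendantvertices} with $k=1$, which in turn rests on the Schur--complement calculation of Proposition~\ref{borderedmatrix}, and your explicit inverse $\bigl(\begin{smallmatrix} -B & I \\ I & 0 \end{smallmatrix}\bigr)$ together with the signature matrices $\operatorname{diag}(I,-I)$ and $\operatorname{diag}(D_-,D_-)$ is exactly what those propositions produce at $k=1$. That said, your route buys two things the paper's deduction does not. First, both cited propositions assume $B$ is nonsingular, while the corollary is stated for an arbitrary adjacency matrix $B$; your direct verification that $\bigl(\begin{smallmatrix} 0 & I \\ I & B \end{smallmatrix}\bigr)$ is invertible for \emph{every} symmetric $B$ closes this gap and justifies the corollary in the generality in which it is stated, whereas the paper's own chain of references technically covers only invertible $B$. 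Second, you exhibit the block-swap permutation realizing $(G^{1,1}_B)^\dagger \cong G^{1,1}_B$, which is the actual content of ``self pseudo-invertible'' and is left implicit in the paper. Your identification $D_-BD_-=-B$ in the bipartite case is correctly grounded in the remark preceding the corollary (for a $\{0,1\}$ matrix, $D_-BD_-\le 0$ forces equality with $-B$). In short: same computation, but more elementary in its justification and strictly more general in its hypotheses.
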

\medskip 

In Fig.~\ref{fig-P22-pseudoinv} and Fig.~\ref{fig-F022-pseudoinv} we show the simple path graph $G_B=\mathbb{P}_2$ on two vertices and the graph graph $G_B=F_0$ representing fulvene organic molecule. We also show the graphs $G_B^{2,1}$ constructed from $G_B$ by adding two pendant vertices and their negatively pseudo-invertible graphs $(G_B^{2,1})^\dagger$. For $G_B=\mathbb{P}_2$ the graph $(G_B^{2,1})^\dagger$ is also positively pseudo-invertible bipartite graph.

\begin{figure}
    \centering
    \includegraphics[width=.25\textwidth]{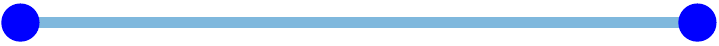}
    \quad
    \includegraphics[width=.3\textwidth]{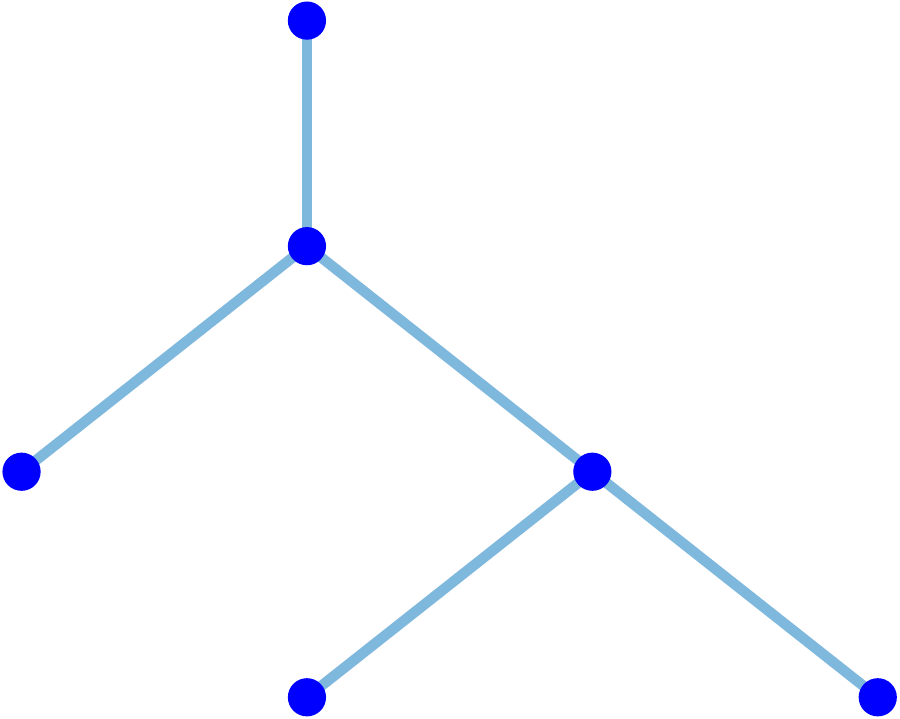}
    \quad
    \includegraphics[width=.3\textwidth]{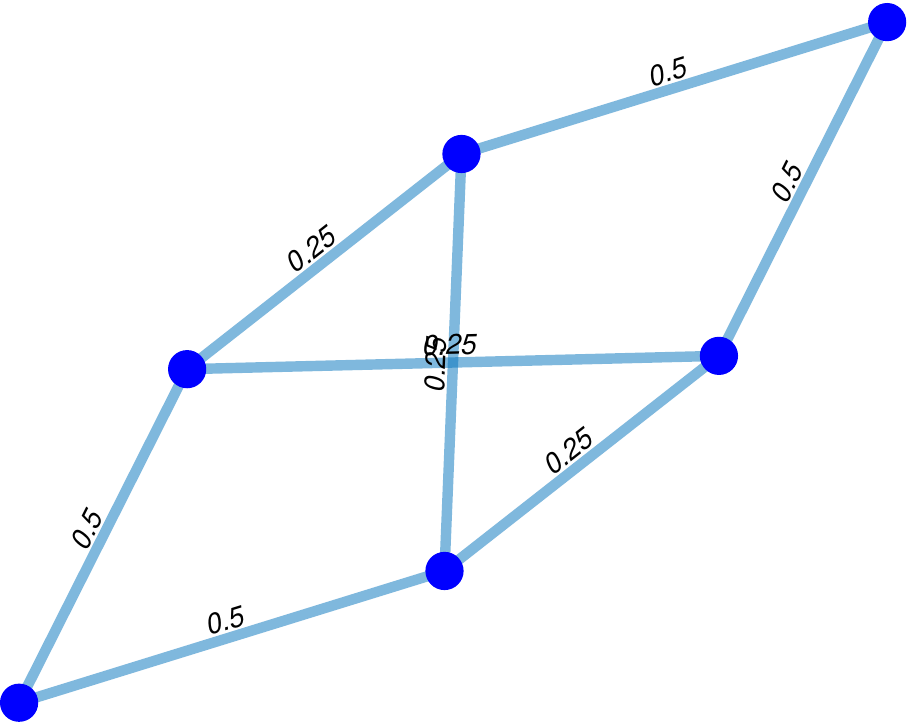}

    \caption{\small The path graph $G_B=\mathbb{P}_2$ with two vertices (left), the graph $G_B^{2,1}$ constructed from the path graph $\mathbb{P}_2$ by adding two pendant vertices (middle), its positive pseudo-inverse graph $(G_B^{2,1})^\dagger$ (right).}
    \label{fig-P22-pseudoinv}
\end{figure}

\begin{figure}
    \centering
    \includegraphics[width=.3\textwidth]{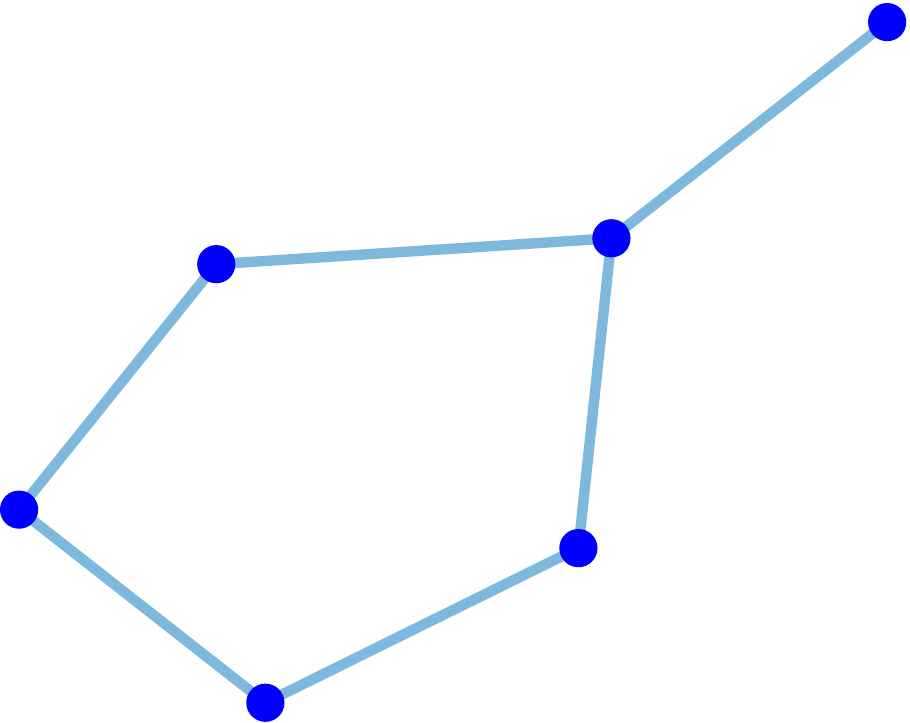}
    \quad
    \includegraphics[width=.3\textwidth]{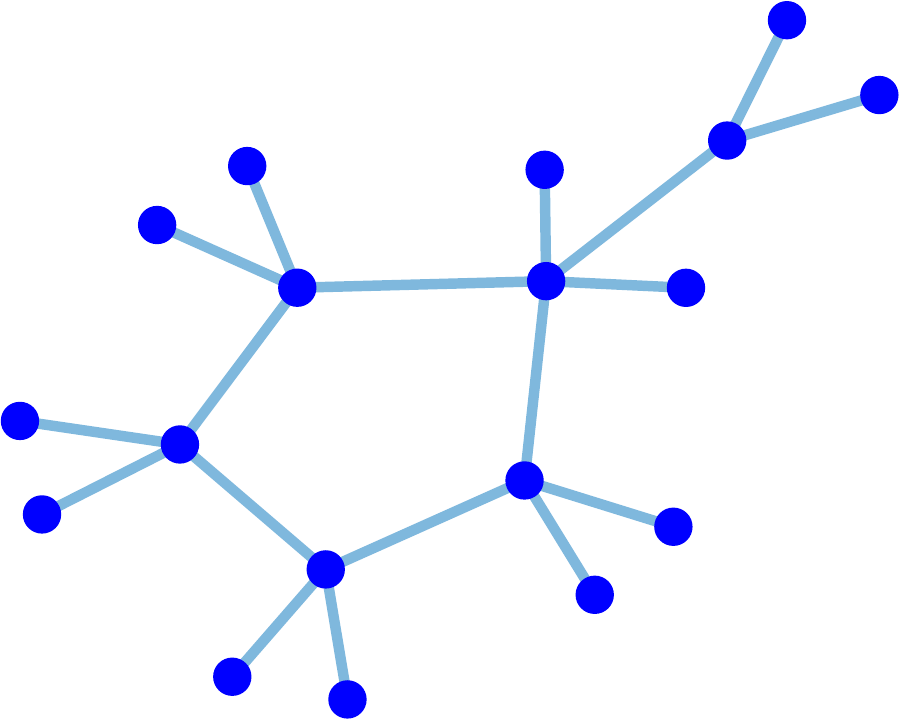}
    \quad
    \includegraphics[width=.3\textwidth]{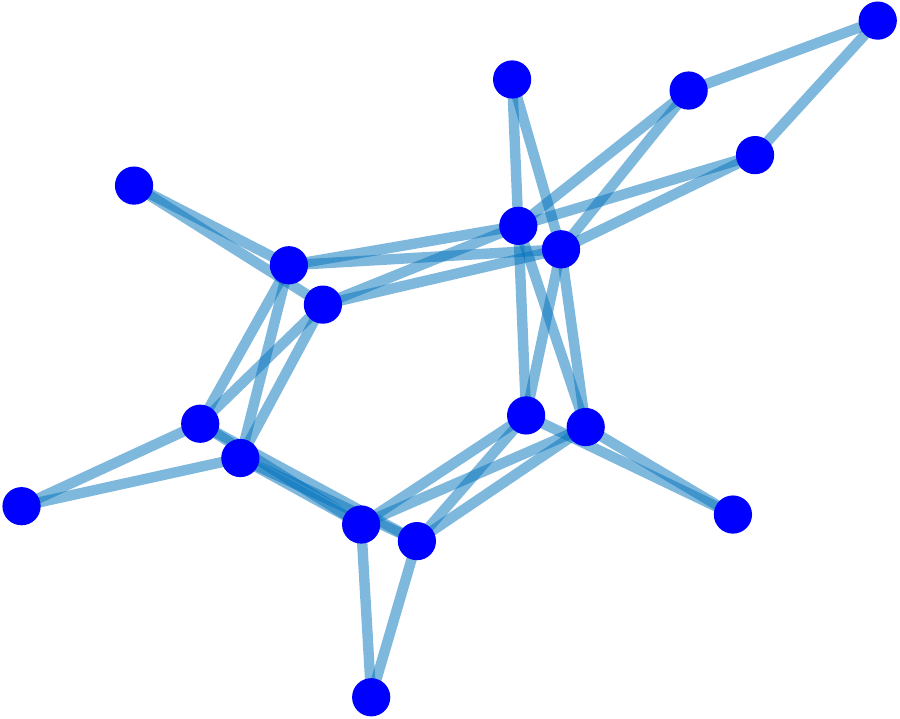}

    \caption{\small The negatively invertible fulvene graph $G_B=F_0$ (left), the graph $G_B^{2,1}$ constructed from $F_0$ by adding two pendant vertices (middle), its negative pseudo-inverse graph $(G_B^{2,1})^\dagger$ (right).}
    \label{fig-F022-pseudoinv}
\end{figure}

\begin{proposition}\label{graphpendantpathes}
Assume $G_B$ is a graph on $m$ vertices whose adjacency matrix $B$ is invertible.
Let us denote by $G_B^{1,l}, l\in\mathbb{N}$, the graph which is constructed from $G_B$ by adding pendant paths $\mathbb{P}_l$ to each vertex of $G_B$. Then the adjacency matrix $\mathscr{M}$ of $G_B^{1,l}$ is invertible. 
\begin{enumerate}
    \item If $l$ is odd, then $G_B^{1,l}$ is a negatively integrally invertible graph. Moreover, if there exists a signature matrix $D_-$ such that $D_- B D_- \le 0$, then $G_B^{1,l}$ is a positively integrally invertible graph.
    \item If $l$ is even, then $G_B^{1,l}$ is a positively/negatively invertible graph provided that $G_B$ is positively/negatively invertible graph.
\end{enumerate}

\end{proposition}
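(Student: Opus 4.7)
The strategy is to (a) reformulate $\mathscr{M}$ as a block-tridiagonal $(l+1)\times(l+1)$ matrix of $m\times m$ blocks, (b) invert it by block back-substitution from the far end of the paths toward $G_B$ to obtain an explicit formula for $\mathscr{M}^{-1}$, and (c) exhibit a signature matrix $D$ that conjugates $\mathscr{M}^{-1}$ into a matrix with entries of a consistent sign. For (a), organize the $(l+1)m$ vertices into $l+1$ layers of $m$ vertices: layer $0$ is the vertex set of $G_B$ and layer $i$ (for $1\le i\le l$) is the $i$-th vertex of each pendant path. Then $\mathscr{M}$ has $B$ in the $(0,0)$ block, zero in the remaining diagonal blocks, identity matrices $I_m$ in both the sub- and super-diagonals, and zero elsewhere.

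For (b), the system $\mathscr{M}(x_0,\ldots,x_l)^T=(y_0,\ldots,y_l)^T$ decouples into the boundary equation $Bx_0+x_1=y_0$, the three-term block recurrence $x_{i-1}+x_{i+1}=y_i$ for $1\le i\le l-1$, and the endpoint equation $x_{l-1}=y_l$. Starting from $x_{l-1}=y_l$ and iterating $x_{i-1}=y_i-x_{i+1}$ backwards, each $x_j$ with $j\equiv l-1\pmod 2$ is expressed as an alternating $\pm y_i$ sum that does not involve $B$. If $l$ is odd, then $l-1$ is even and the back-substitution reaches $x_0$, determining it purely from the odd-indexed $y_i$'s; the equation $x_1=y_0-Bx_0$ and the remaining substitutions then involve $B$ but not $B^{-1}$, so $\det\mathscr{M}=\pm 1$ and every block of $\mathscr{M}^{-1}$ lies in $\{0,\pm I_m,\pm B\}$. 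If $l$ is even, the substitution instead leaves a relation of the form $\pm Bx_0=(\text{linear combination of the } y_i)$, which requires $B^{-1}$; hence $\det\mathscr{M}=\pm\det B$ and every block of $\mathscr{M}^{-1}$ lies in $\{0,\pm I_m,\pm B^{-1}\}$. In both cases $\mathscr{M}$ is invertible, and integrally so precisely when $l$ is odd or $G_B$ is integrally invertible.

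For (c), I would seek $D$ of the block-diagonal form $D=\mathrm{diag}(s_0 D^*, s_1 D^*,\ldots, s_l D^*)$, where the inner $m\times m$ signature $D^*$ is chosen according to the case and the scalar signs $s_i\in\{\pm 1\}$ follow a period-$4$ alternating pattern (either $s_i=(-1)^{\lceil i/2\rceil}$ or $s_i=(-1)^{\lfloor i/2\rfloor}$, depending on which sign we want to realize). Specifically: for $l$ odd and negative signability take $D^*=I_m$, whence the diagonal $-B$ blocks of $\mathscr{M}^{-1}$ are already nonpositive (using $B\ge 0$) and the products $s_is_j$ align the remaining off-diagonal $\pm I_m$ and $\pm B$ blocks; for $l$ odd and positive signability, under the hypothesis $D_-BD_-\le 0$, take $D^*=D_-$ so that the $\pm B$ blocks become $\mp D_-BD_-\ge 0$; for $l$ even and positive (respectively, negative) signability take $D^*=D_\pm$ from the positive (respectively, negative) invertibility hypothesis on $G_B$, which turns the $\pm B^{-1}$ blocks of $\mathscr{M}^{-1}$ into entries of the prescribed sign while the $\pm I_m$ blocks are controlled by $s_is_j$ alone.

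The principal obstacle is the sign bookkeeping of step (c): one must verify for every nonzero $(i,j)$-block of $\mathscr{M}^{-1}$ that the product $s_is_j$ exactly compensates the intrinsic sign of that block as read off from the back-substitution in (b), after the further conjugation by $D^*$. This reduces to a finite case analysis on the parities of $i$, $j$ and of $l$, made tractable by the regular alternating structure of the explicit back-substitution solution.
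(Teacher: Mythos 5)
Your proposal is correct and reaches the same explicit inverse as the paper, but it is organized differently. The paper writes $\mathscr{M}$ in the two-block form (\ref{blockM}) with $A=\mathscr{A}\otimes I_m$ (the path layers) and $K=e\otimes I_m$, computes the Schur complement $S_A=A-KB^{-1}K^{T}=\mathscr{A}\otimes I_m-ee^{T}\otimes B^{-1}$, displays $S_A^{-1}$ and then $\mathscr{M}^{-1}$ explicitly for $l\le 5$ via the Banachiewicz--Schur formula (\ref{MA}) (after checking $(A,B)$-compatibility), and asserts the general pattern together with explicit signature matrices. You instead perform block back-substitution directly on the $(l+1)$-layer tridiagonal system, which yields the same structural conclusion --- blocks of $\mathscr{M}^{-1}$ lie in $\{0,\pm I_m,\pm B\}$ for $l$ odd and in $\{0,\pm I_m,\pm B^{-1}\}$ for $l$ even, whence $\det\mathscr{M}=\pm1$ or $\pm\det B$ respectively --- but handles general $l$ uniformly rather than by displaying small cases and writing ``etc.'', and it bypasses the compatibility verification entirely. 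The trade-off is that the paper's route reuses the Section 2 machinery and records $S_A^{-1}$ as a separate object of interest, while yours is more elementary and makes the induction on $l$ transparent. Your deferred step (c) is at the same level of detail as the paper's (which also states its signature matrices without a line-by-line sign check); for the record, your period-four scalar pattern $s_i=(-1)^{\lceil i/2\rceil}$ (respectively $(-1)^{\lfloor i/2\rfloor}$) does reproduce the paper's signatures on the displayed cases, e.g.\ for $l=3$ it gives $(+,-,-,+)$ across the layers, matching $D=\mathrm{diag}(I,[I,-I],-I)$ up to the paper's ordering and a global sign, and the parity bookkeeping closes because the $\pm B$ (or $\pm B^{-1}$) contributions alternate in sign without accumulating through the forward substitution.
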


\begin{figure}
    \centering
    \includegraphics[width=.3\textwidth]{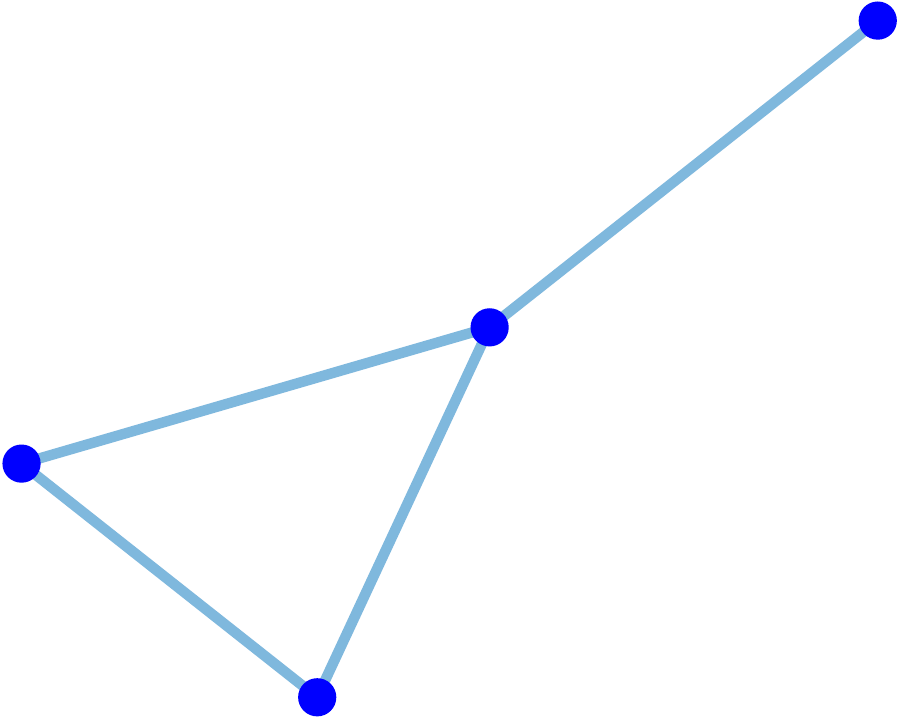}
    \quad
    \includegraphics[width=.3\textwidth]{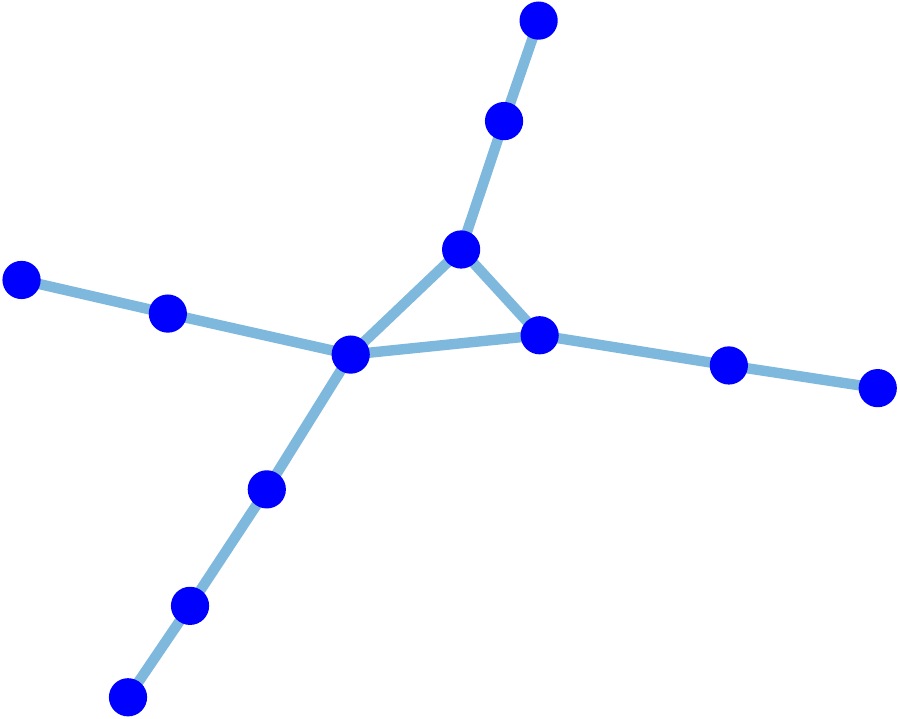}
    \quad
    \includegraphics[width=.3\textwidth]{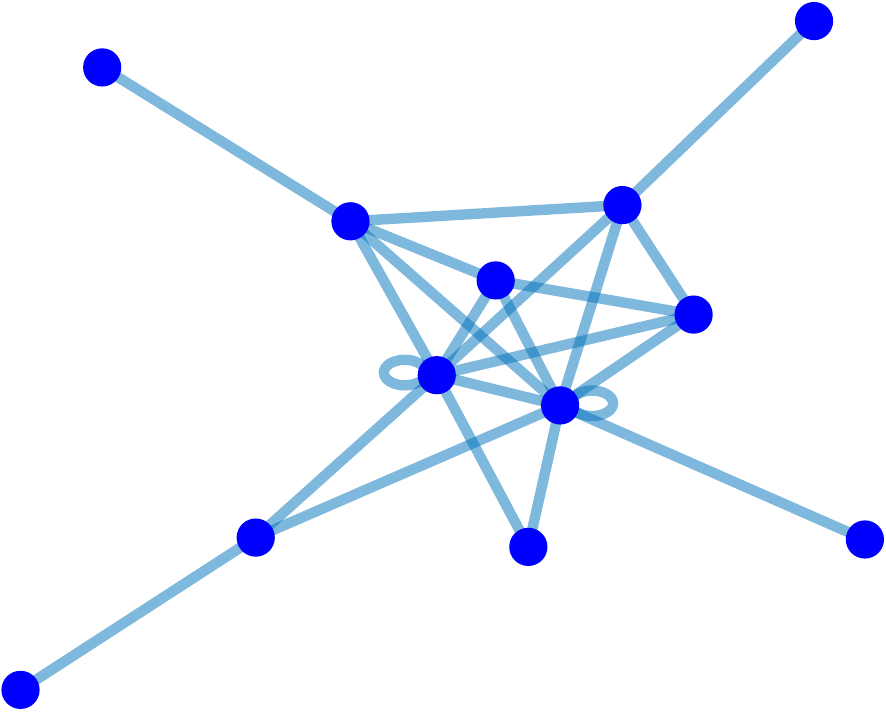}

    \caption{\small The positively invertible graph $G_B$ on four vertices (left), the graph $G_B^{1,2}$ constructed from $G_B$ by adding pendant paths $\mathbb{P}_2$ on each vertex (middle), its pseudo-inverse graph $(G_B^{1,2})^\dagger$ (right).}
    \label{fig-C12-pseudoinv}
\end{figure}

\begin{proof}
The adjacency matrix $\mathscr{M}\in \R^{(l+1)m\times (l+1)m}$ of the graph $G^{1,l}_B$ has the block form (\ref{blockM}) where the block matrices $A\in \R^{l m\times l m}$, and $K\in \R^{l m \times m}$ have the form:
\[
A = \mathscr{A} \otimes I = \begin{pmatrix}
    0 & I &   & & &  \\
    I & 0 & I & & &  \\
      & I & 0 & \ddots & &  \\
      &  &\ddots  &\ddots &  I & \\
      &  &  &  I & 0 &\\
    \end{pmatrix}
  \qquad K= e \otimes I = (I, 0, \dots, 0)^T,
\]
where $I=I_m$ is the identity matrix, $e=(1, 0, \dots, 0)^T\in \mathbb{R}^l$, and $\mathscr{A}$ is the adjacency matrix of the path graph $\mathbb{P}_l$, i.\,e.,  $\mathscr{A}_{ij}=1$ for $|i-j|=1$, and  $\mathscr{A}_{ij}=0$, otherwise. An example of a graph $G_B^{1,2}$ is shown in Fig.~\ref{fig-C12-pseudoinv}). The Schur complement matrix $S_A=A- K B^{-1} K^T = \mathscr{A} \otimes I  - e \otimes I B^{-1} e^T \otimes I = \mathscr{A} \otimes I  - e e^T \otimes B^{-1}$. It is straightforward to verify that the matrix $S_A$ is invertible, and it has the block cyclic form:
\[
\underbrace{S_A^{-1}}_{l=2}
= \begin{pmatrix}
0 & I \\
I & B^{-1}
\end{pmatrix},
\qquad
\underbrace{S_A^{-1}}_{l=3}
= \begin{pmatrix}
-B & 0 & B \\
0 & 0 & I \\
B & I & -B
\end{pmatrix},
\]
\[
\underbrace{S_A^{-1}}_{l=4}
= \begin{pmatrix}
0 & I & 0 & -I \\
I & B^{-1} & 0 & -B^{-1} \\
0 & 0 & 0 & I \\
-I & -B^{-1} & I & B^{-1}
\end{pmatrix},
\quad 
\underbrace{S_A^{-1}}_{l=5}=\begin{pmatrix}
-B & 0 & B & 0 & -B \\
0 & 0 & I & 0 & -I \\
B & I & -B & 0 & B \\
0 & 0 & 0 & 0 & I \\
-B & -I & B & I & -B
\end{pmatrix},
\]
etc. In general, if $l$ is odd, then the matrix $S_A^{-1}$ contains the matrix $B$, whereas, if $l$ is even, then it contains the inverse matrix $B^{-1}$. As the inverse matrices $B^{-1}$ and $S_A^{-1}$ exist then the matrix $K$ is $(A,B)$ compatible. Moreover, if $l$ is odd, then we have $S_A^{-1} K B^{-1} = (-I, 0, I, 0, \dots )^T$, and so $B^{-1} K^T S_A^{-1} K B^{-1} = 0$. If $l$ is even, then we have $S_A^{-1} K B^{-1} = (0, B^{-1}, 0, - B^{-1}, \dots )^T$, and so $B^{-1} K^T S_A^{-1} K B^{-1} = - B^{-1}$. The matrix $\mathscr{M}$ is invertible, and it has the block matrix form:
\[
\underbrace{\mathscr{M}^{-1}}_{l=2}
= \begin{pmatrix}
0 & I     & B^{-1}\\
I & B^{-1}& B^{-1}\\
0 & B^{-1}& B^{-1}\\
\end{pmatrix},
\qquad
\underbrace{\mathscr{M}^{-1}}_{l=3}
= \begin{pmatrix}
-B & 0 & B & I\\
0 & 0 & I  & 0\\
B & I & -B &-I\\
I & 0 & -I & 0
\end{pmatrix},
\]
\[
\underbrace{\mathscr{M}^{-1}}_{l=4}
= \begin{pmatrix}
0 & I & 0 & -I & 0 \\
I & B^{-1} & 0 & -B^{-1} & -B^{-1} \\
0 & 0 & 0 & I & 0\\
-I & -B^{-1} & I & B^{-1} &  B^{-1}\\
0 & -B^{-1} & 0 & B^{-1} & B^{-1} \\

\end{pmatrix},
\quad 
\underbrace{\mathscr{M}^{-1}}_{l=5}=\begin{pmatrix}
-B & 0 & B & 0 & -B & I \\
0 & 0 & I & 0 & -I  & 0\\
B & I & -B & 0 & B  & -I \\
0 & 0 & 0 & 0 & I   & 0\\
-B & -I & B & I & -B& I\\
I & 0 & -I & 0 & I  &0 
\end{pmatrix},
\]
etc. Now, if $l$ is odd, then the matrix $\mathscr{M}^{-1}$ is integral and $det(\mathscr{M})=\pm 1$. Clearly, the matrix $\mathscr{M}^{-1}$ is signable to a nonpositive matrix by the signature matrix: \\ 
$D=diag([I, [I, -I],-[I, -I], \dots, (-1)^{k-1} [I, -I], -I] )$ where $l=2k +1$. If there exists a signature matrix $D_-$ such that $D_- B D_- \le 0$, then the matrix $\mathscr{M}^{-1}$ is signable to a nonnegative matrix by the signature matrix: \\
$D=diag([D_-, -[ D_-, D_-], [ D_-, D_-], \dots, (-1)^{k} [D_-, D_-], D_-] )$ where $l=2k +1$.

Suppose that $l=2k$ is even. If $B$ is positively pseudo-invertible, then there exists a signature matrix $D_+$ su that $D_+ B^{-1} D_+ \ge 0$. Then the matrix $\mathscr{M}$ is positively pseudo-invertible because $D \mathscr{M}^{-1} D \ge 0$ where $D=diag([D_+, D_+], -[D_+, D_+], \dots, , (-1)^{k-1} [D_+, D_+], -D_+])$. If $B$ is negatively pseudo-invertible, then there exists a signature matrix $D_-$ su that $D_- B^{-1} D_- \le 0$. Then the matrix $\mathscr{M}$ is negatively pseudo-invertible as $D \mathscr{M}^{-1} D \le 0$ where $D=diag([D_-, -D_-], -[D_-, -D_-], \dots, , (-1)^{k-1} [D_-, -D_-], D_-])$, as claimed.

\end{proof}

\medskip


\begin{figure}
    \centering
    \includegraphics[width=.3\textwidth]{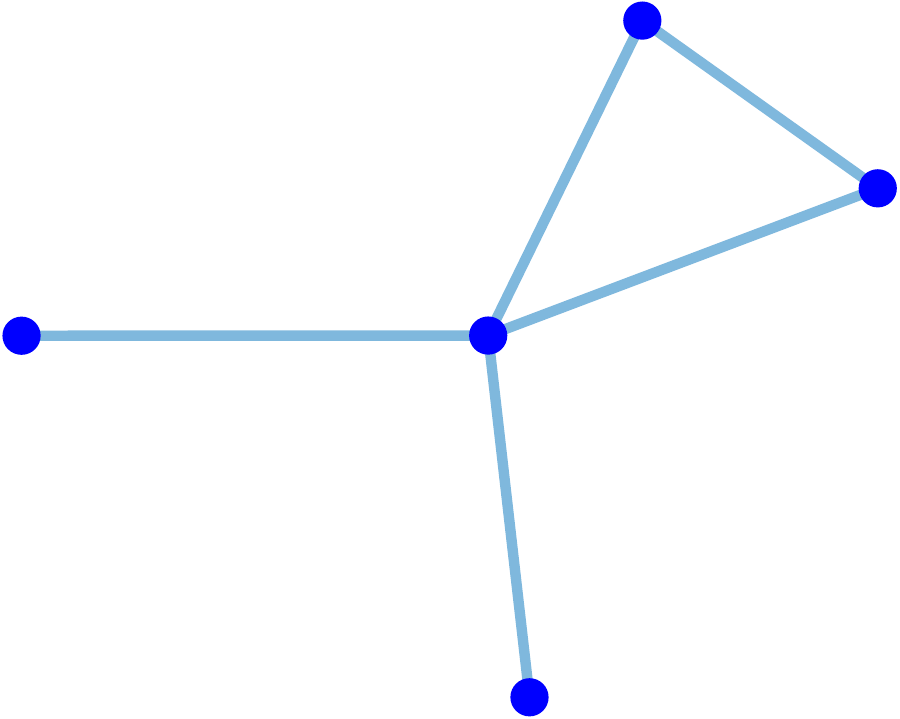}
    \quad
    \includegraphics[width=.3\textwidth]{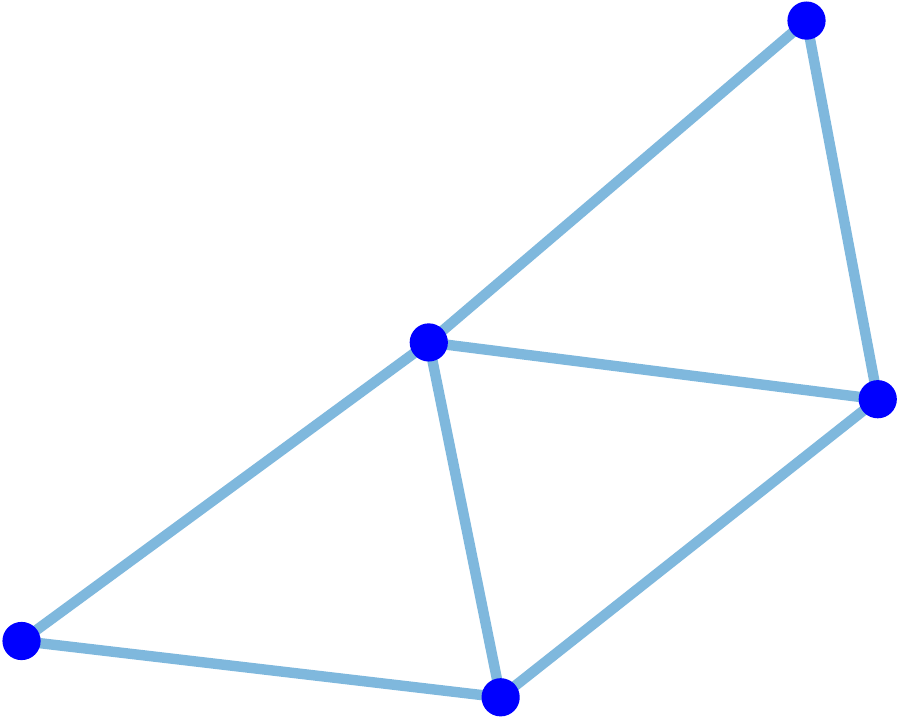}
    \quad
    \includegraphics[width=.3\textwidth]{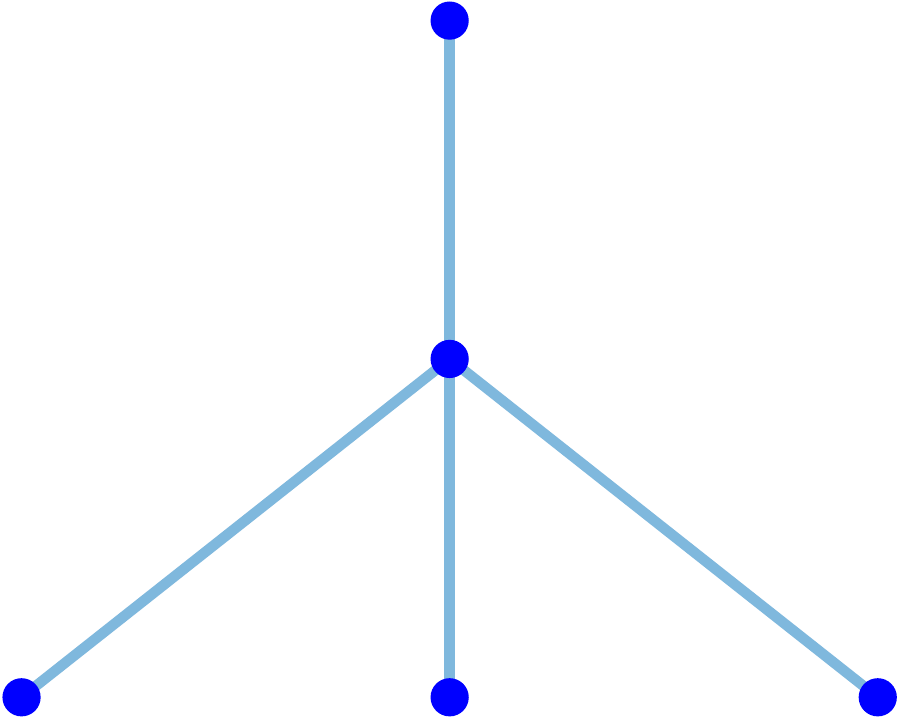}

\vglue 0.5truecm

    \includegraphics[width=.3\textwidth]{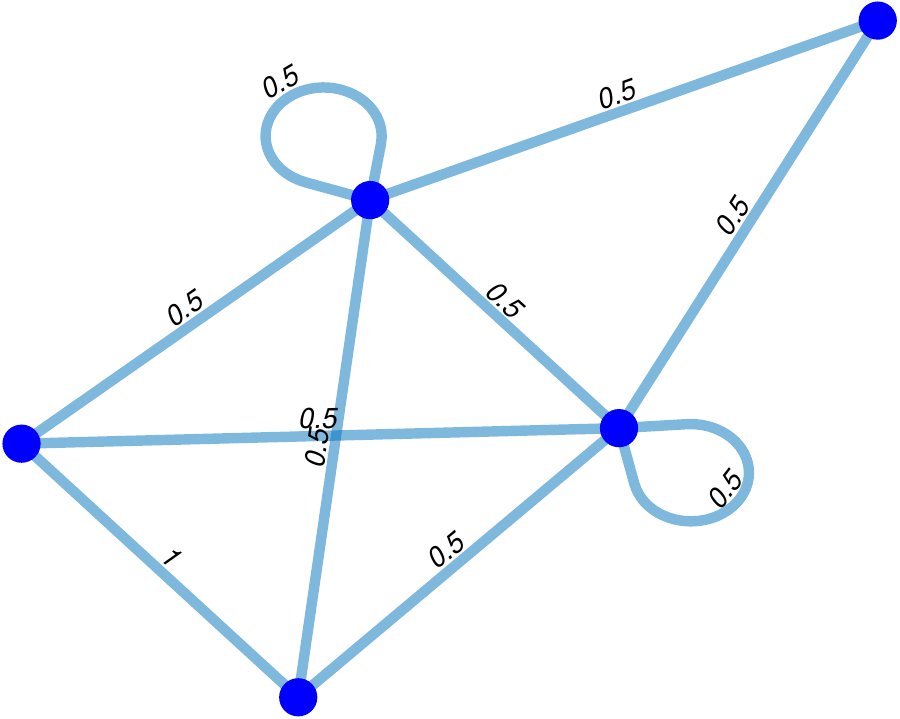}
    \quad
    \includegraphics[width=.3\textwidth]{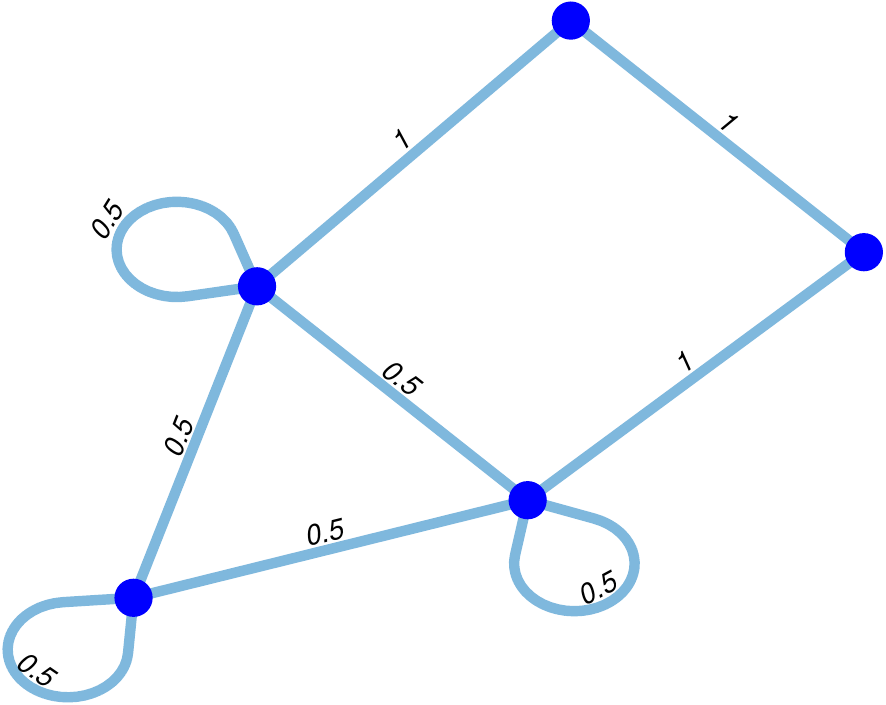}
    \quad
    \includegraphics[width=.3\textwidth]{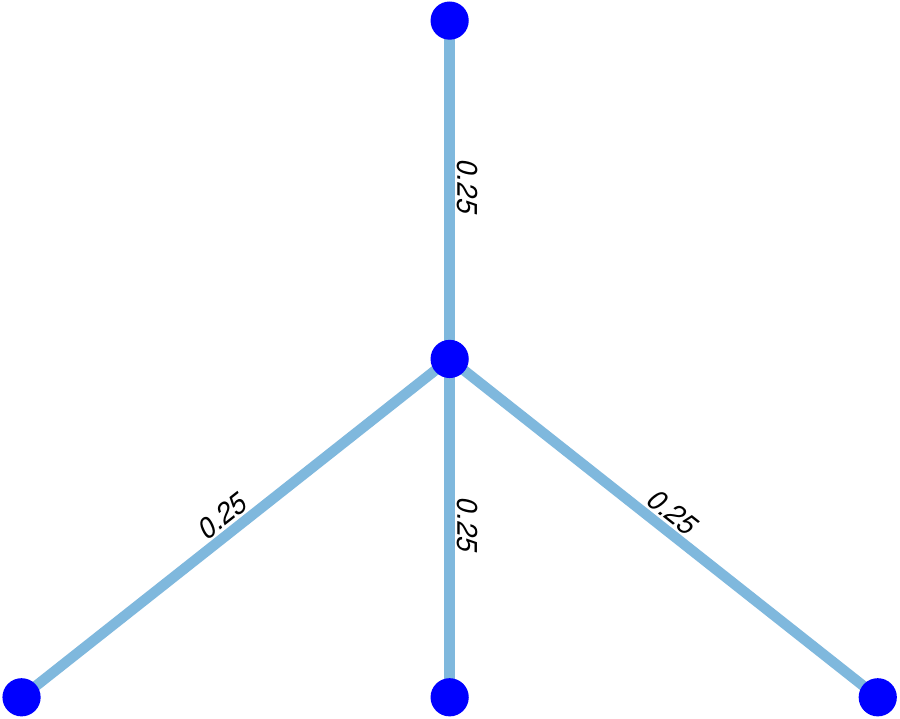}

    \caption{\small Top row: examples of positively (left), negatively (middle), positively and negatively (bipartite) pseudo-invertible graphs with  adjacency matrix $det(B)\not= \pm1$ on $m=5$  vertices. Corresponding graph pseudo-inversions (bottom row).}
    \label{fig-m5-pseudoinv}
\end{figure}

\begin{figure}
    \centering
    
    \includegraphics[width=.3\textwidth]{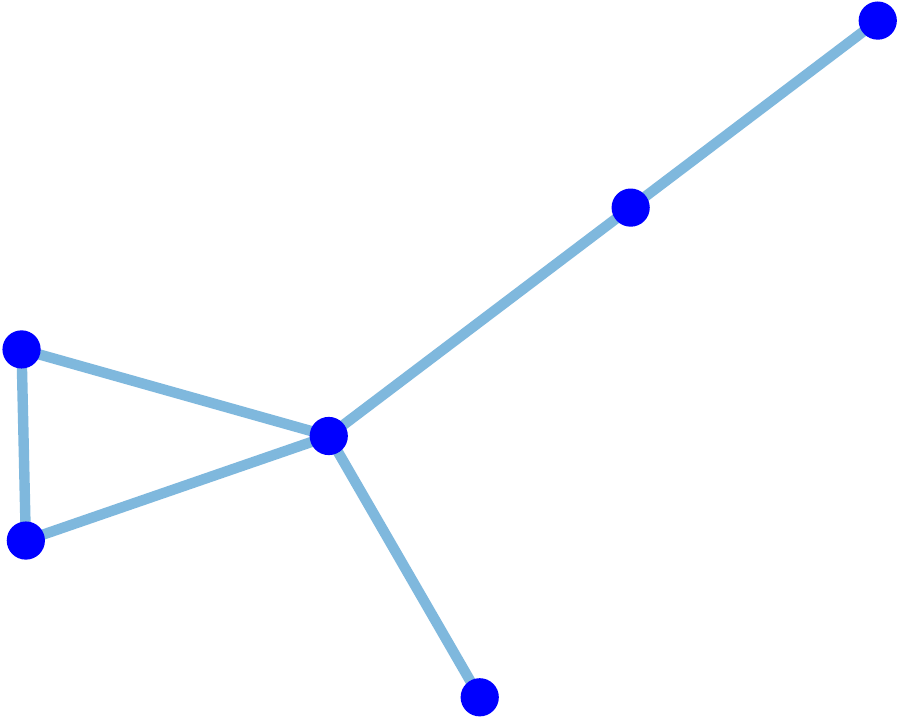}
    \quad
    \includegraphics[width=.3\textwidth]{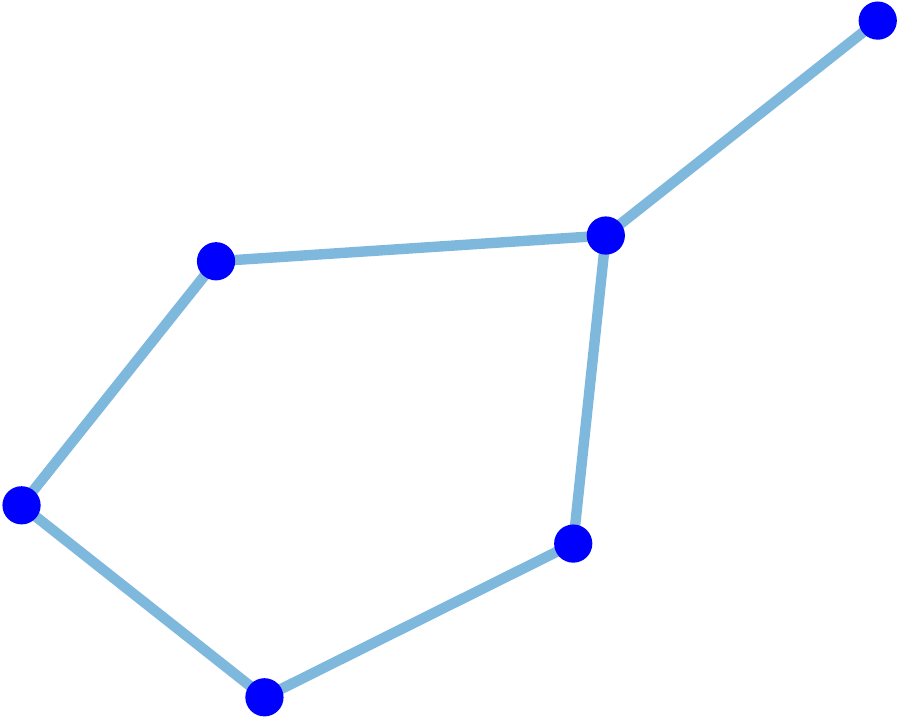}
    \quad
    \includegraphics[width=.3\textwidth]{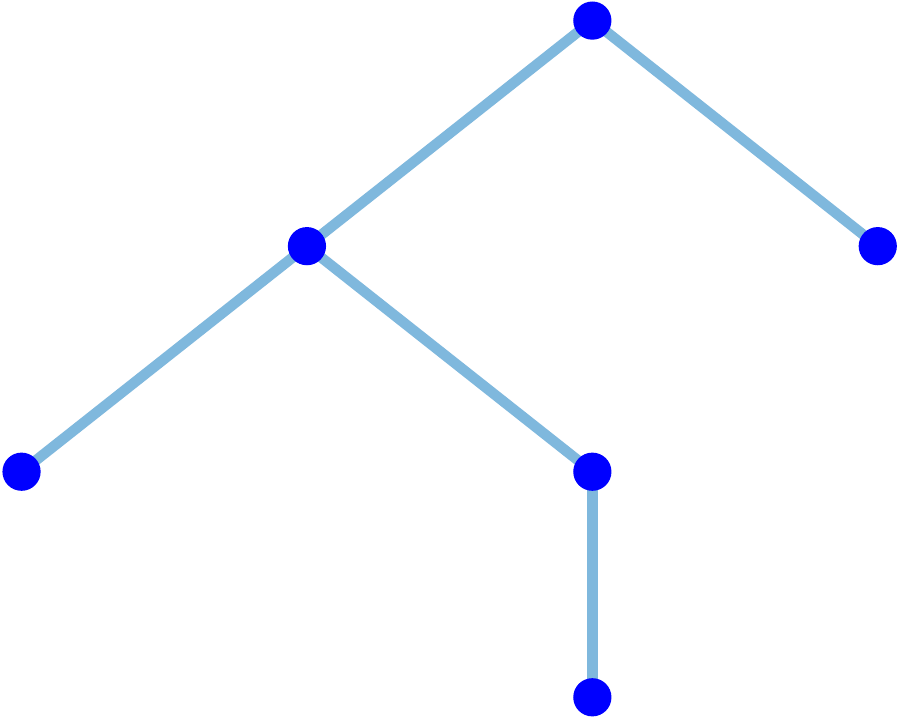}
    
\vglue 0.5truecm

    \includegraphics[width=.3\textwidth]{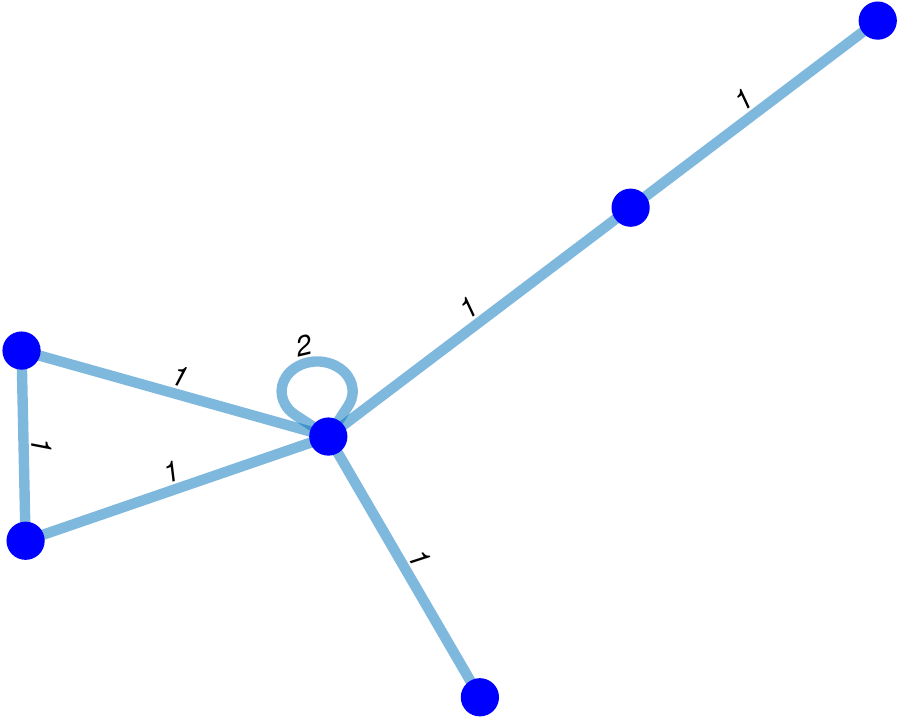}
    \quad
    \includegraphics[width=.3\textwidth]{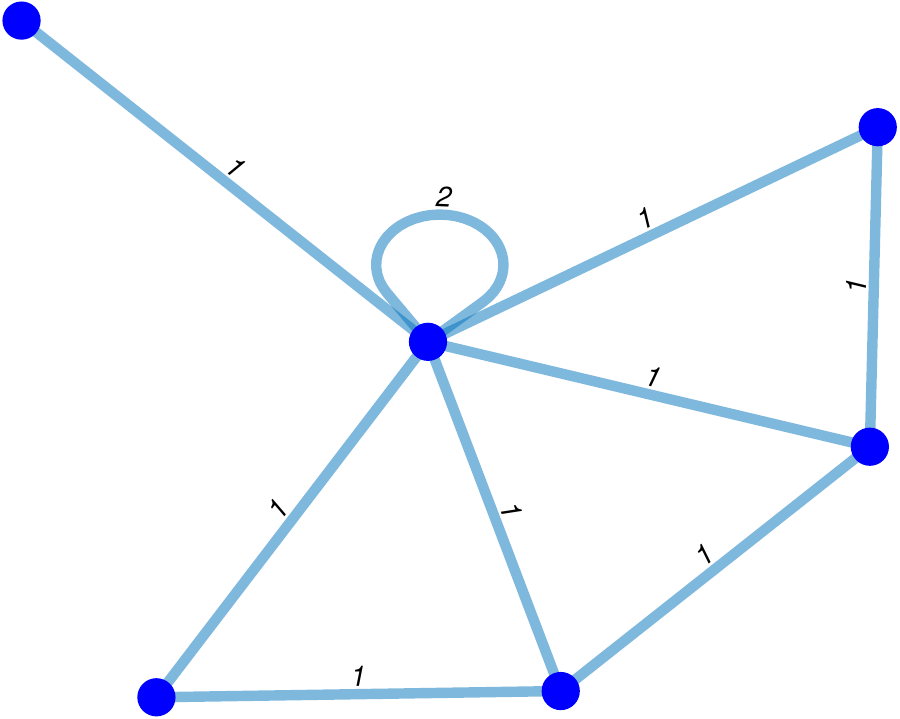}
    \quad
    \includegraphics[width=.3\textwidth]{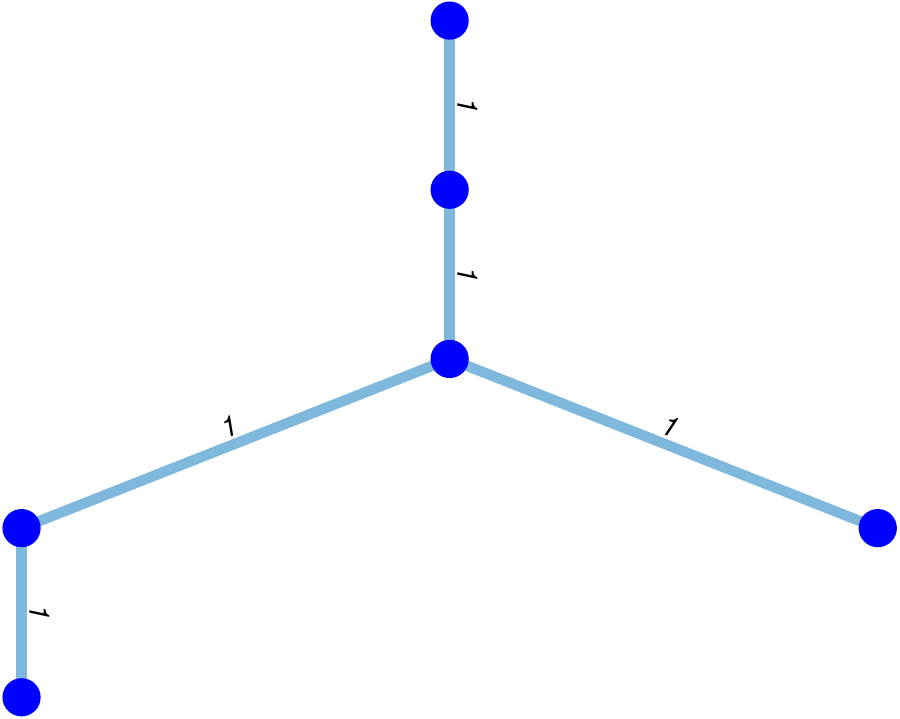}
    
    \caption{\small Top row: examples of positively (left), negatively (middle), positively and negatively (bipartite) integrally invertible graphs on $m=6$  vertices. Corresponding graph inversions (bottom row).}
    \label{fig-m6-intinv}
\end{figure}

In Fig.~\ref{fig-m6-intinv} (middle) we show the graph $F_0$ with 6 vertices representing the organic molecule of the fulvene hydrocarbon (5-methylidenecyclopenta-1,3-diene). The graph $F_0$ is negatively (but not positively) invertible with the integral inverse graph $(F_0)^{-1}$ depicted in Fig.~\ref{fig-m6-intinv} (middle). The nonsymmetric spectrum consists of the following eigenvalues $\sigma(F_0)= \{ -1.8608, -q, -0.2541, 1/q, 1, 2.1149 \}$ where $q=(\sqrt{5}+1)/2$ is the golden ratio with the least positive eigenvalue $\lambda_+(F_0)=1/q$.  The inverse adjacency matrix $A_{F_0}^{-1}$ is signable to a nonpositively integral matrix by the signature matrix $D = diag(-1,-1,1,1,1-1)$. The inverse graph is depicted in Fig.~\ref{fig-m6-intinv} (middle).

\begin{figure}
    \centering
    \includegraphics[width=.3\textwidth]{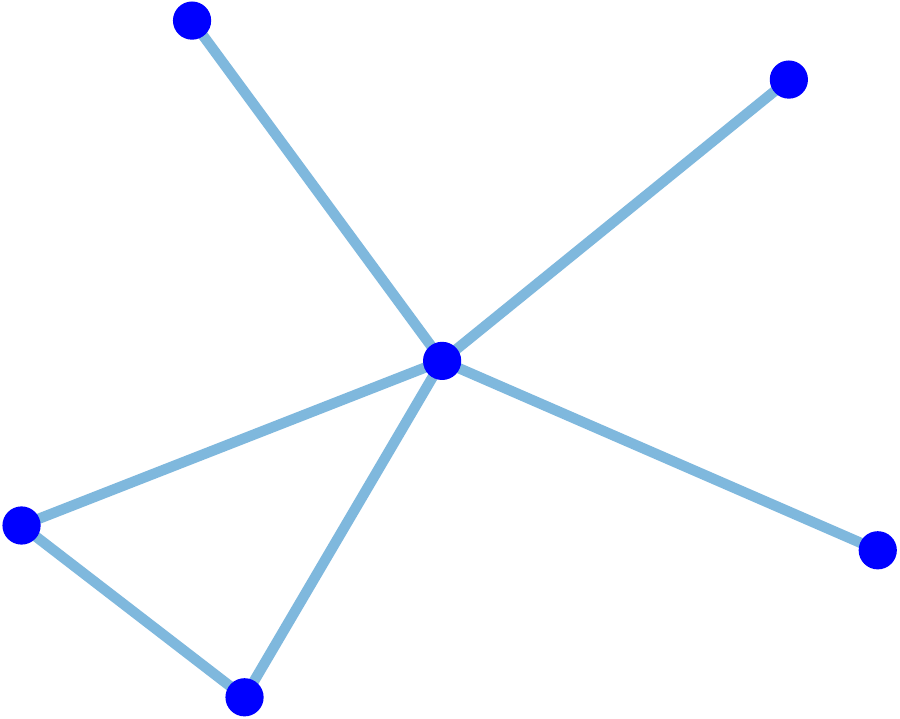}
    \quad
    \includegraphics[width=.3\textwidth]{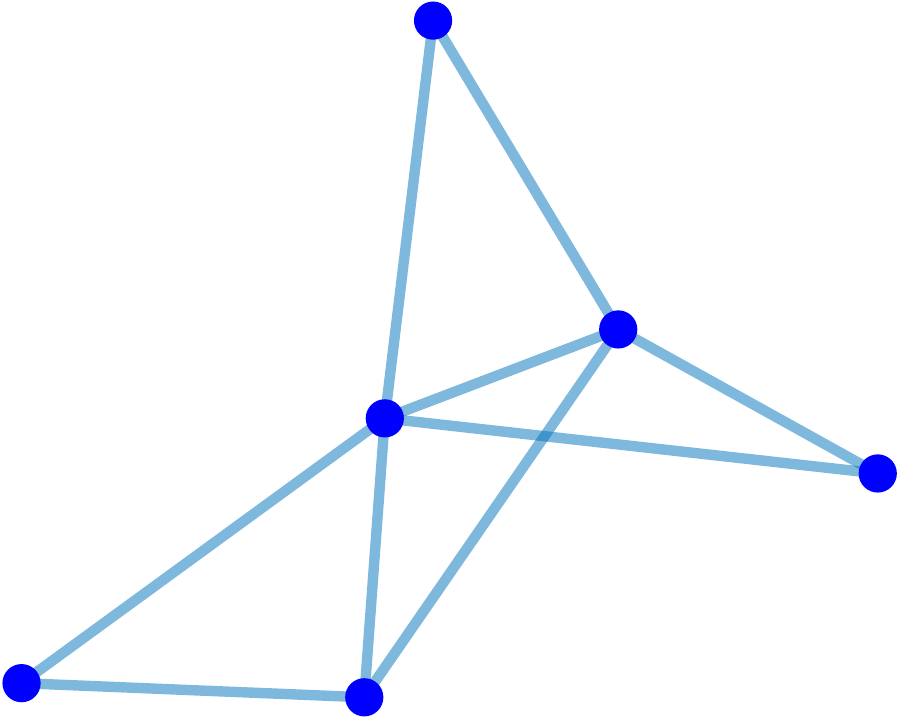}
    \quad
    \includegraphics[width=.3\textwidth]{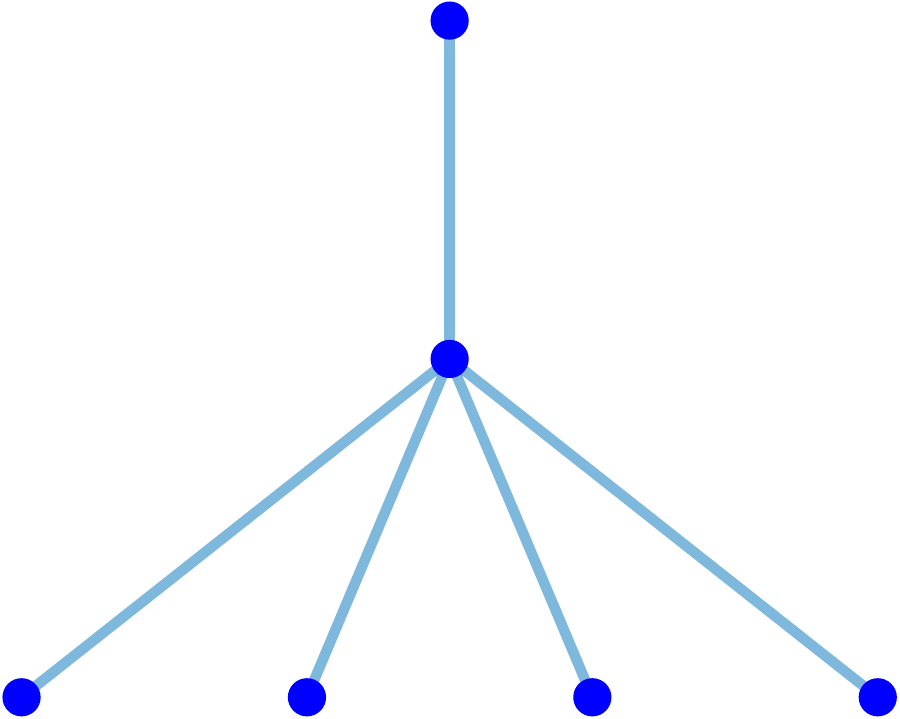}

\vglue 0.5truecm

    \includegraphics[width=.3\textwidth]{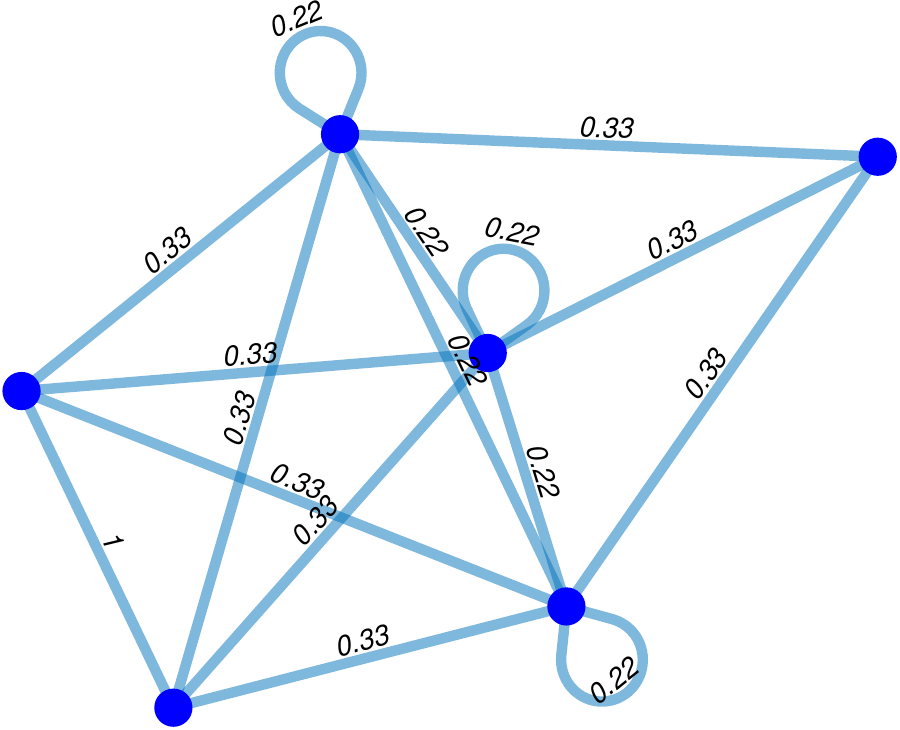}
    \quad
    \includegraphics[width=.3\textwidth]{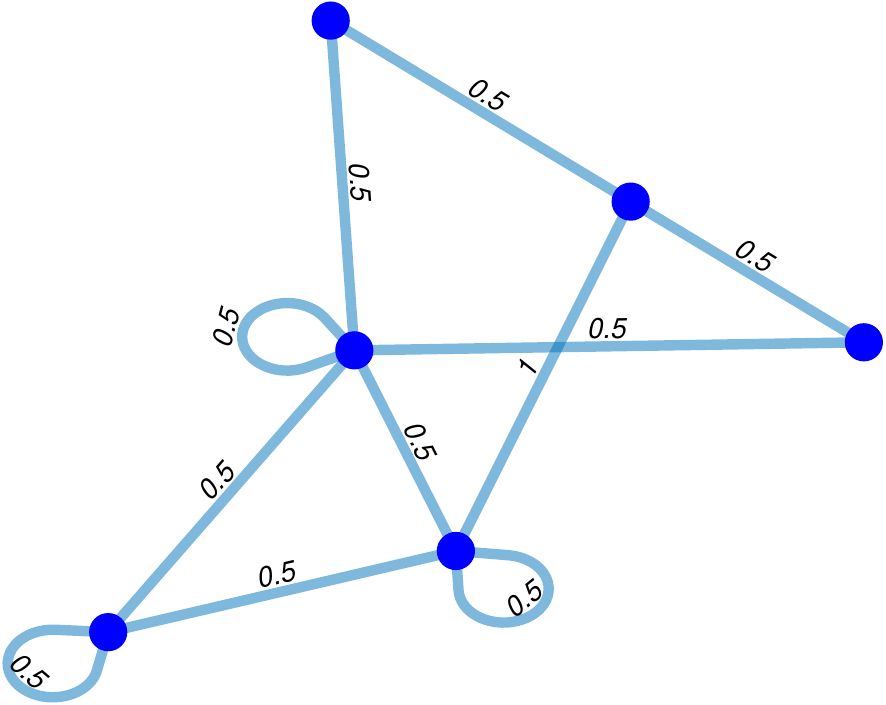}
    \quad
    \includegraphics[width=.3\textwidth]{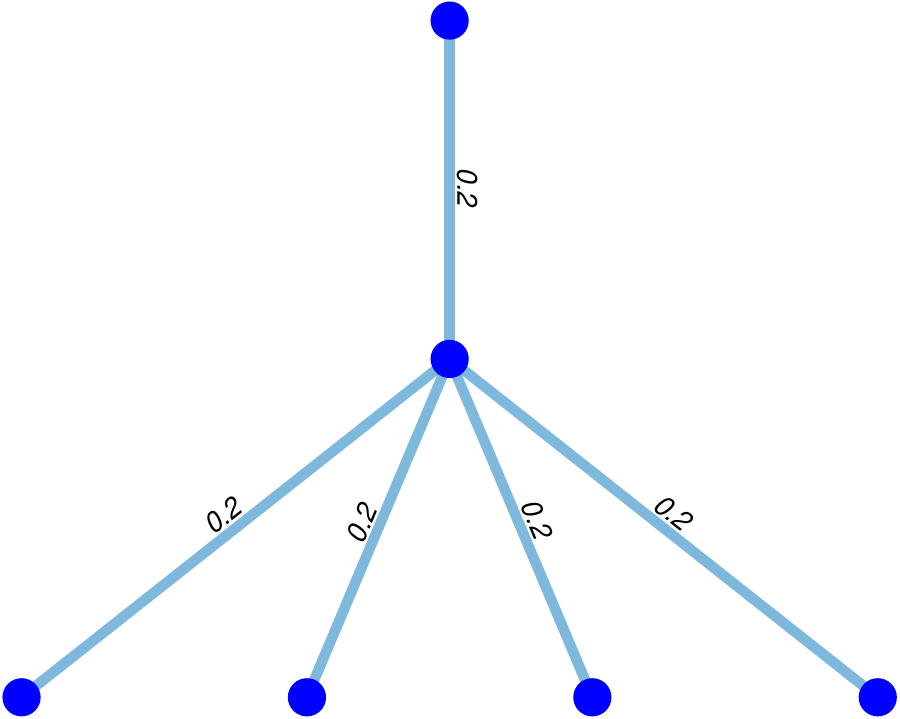}

    \caption{\small Top row: examples of positively (left), negatively (middle), positively and negatively (bipartite) pseudo-invertible graphs with singular adjacency matrix on $m=6$  vertices (top row). Corresponding weighted pseudo-inverse graphs (bottom row).}
    \label{fig-m6-pseudoinv}
\end{figure}

\begin{figure}
    \centering
    \includegraphics[width=.3\textwidth]{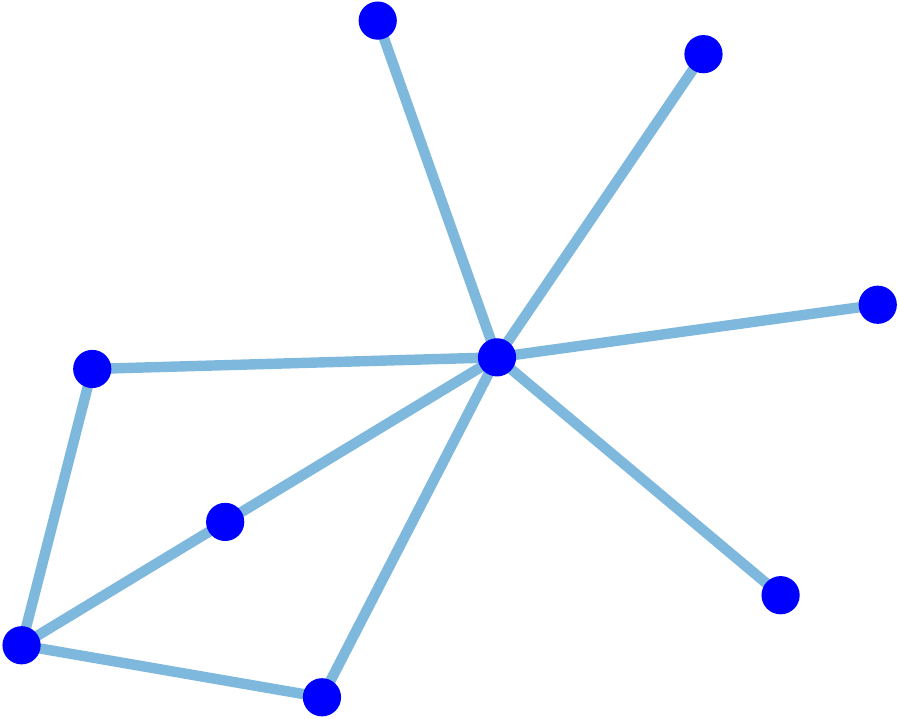}
\qquad 
    \includegraphics[width=.3\textwidth]{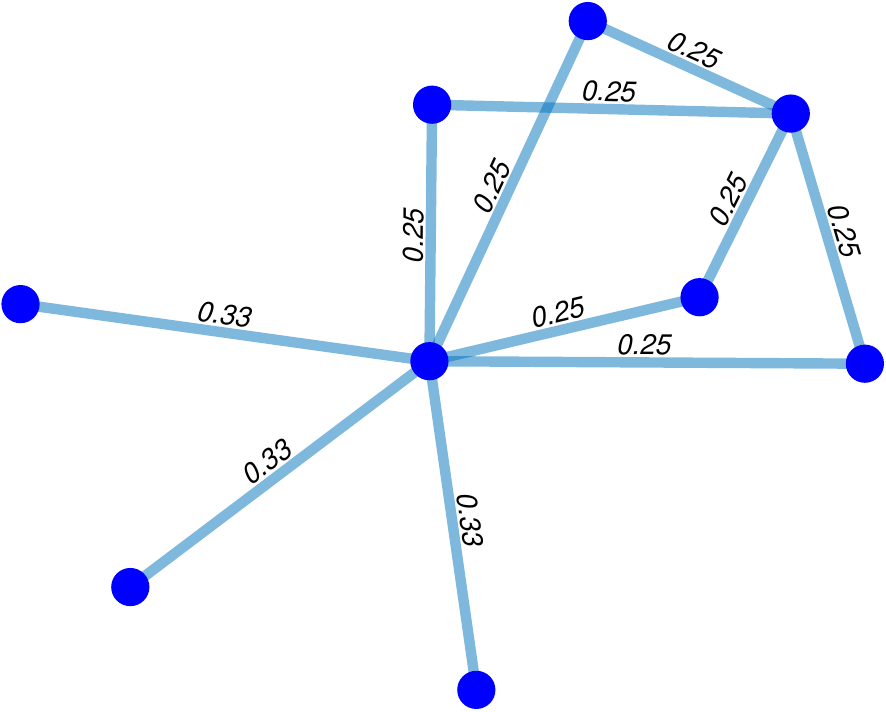}
    \caption{\small Top row: an example of a positively and negatively (bipartite) pseudo-invertible graph on $m=9$ vertices (left). Its pseudo-inverse weighted graph (right)}
    \label{fig-m9-pseudoinv}
\end{figure}

\begin{figure}
    \centering
    \includegraphics[width=.3\textwidth]{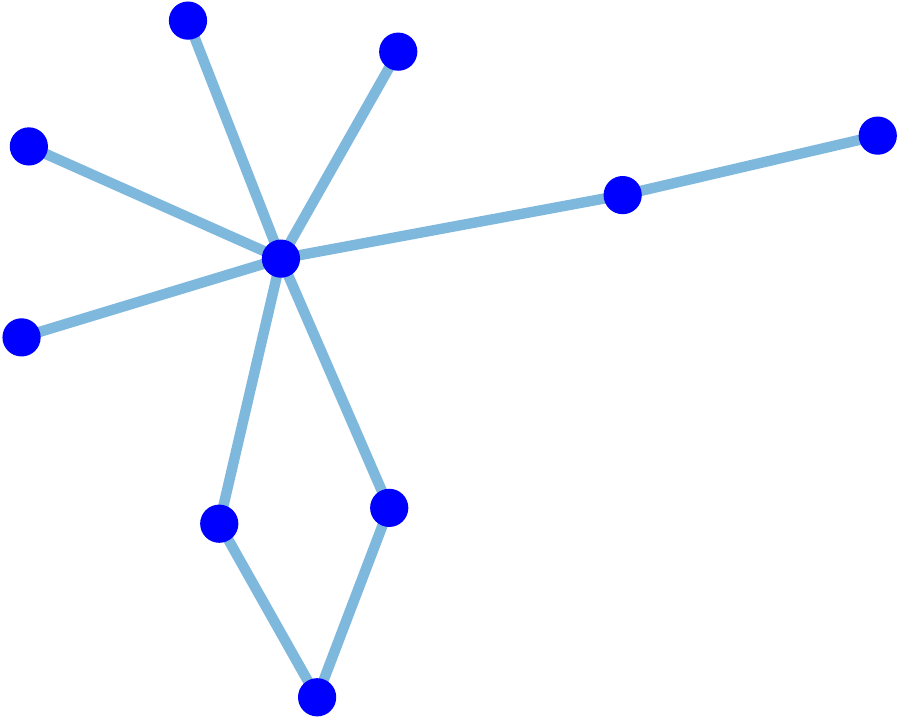}
    \quad
    \includegraphics[width=.3\textwidth]{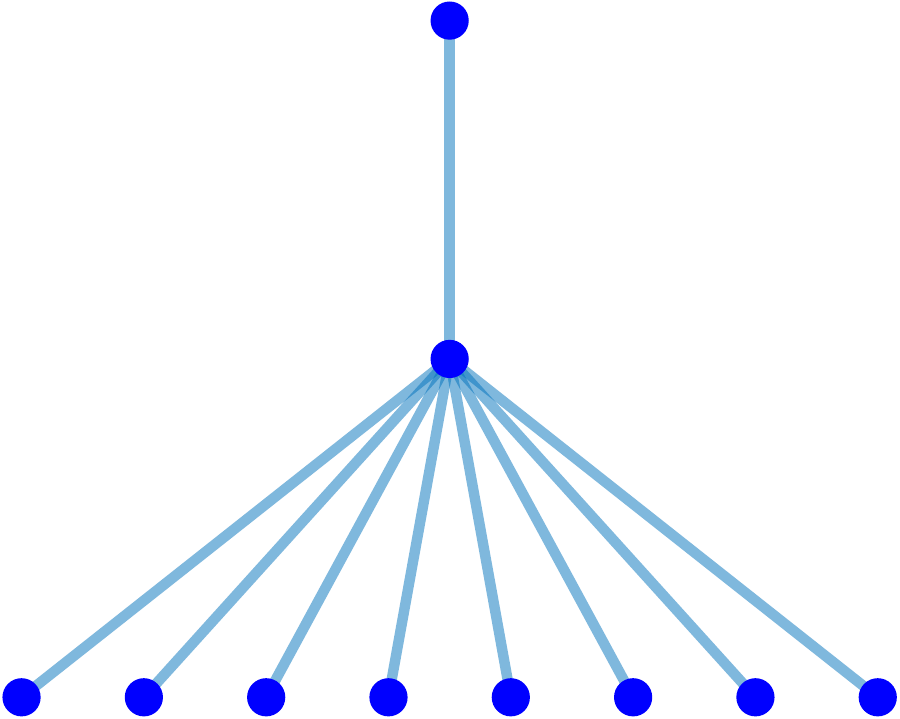}
    \caption{\small Examples of two positively and negatively (bipartite) pseudo-invertible graphs with  singular adjacency matrices on $m=10$  vertices.}
    \label{fig-m10-pseudoinv}
\end{figure}

\section{Conclusions}

In this paper we investigated the Moore-Penrose pseudo-inversion $\mathscr{M}^\dagger$ of a block symmetric matrix $\mathscr{M}$ with applications in the graph theory. We showed that maximal and minimal eigenvalues of a pseudo-inverse matrix $\mathscr{M}^\dagger$ are related to the least positive $\lambda_+(\mathscr{M})>0$ and largest negative eigenvalues $\lambda_-(\mathscr{M})<0$ of a symmetric real matrix $\mathscr{M}$. We also present the key idea of derivation of the explicit Banachiewicz–Schur form of the Moore-Penrose pseudo-inverse of a block symmetric matrix. It can be used in order to characterize the least positive and largest negative eigenvalues in terms of a mixed integer-nonlinear optimization problem. We introduced and analyzed a novel concept of positive and negative pseudo-invertibility of a symmetric matrix and graphs. We present a summary table containing number of positively/negatively pseudo-invertible graphs. The list of graphs with no more than 10 vertices contains complete information on various classes of (pseudo)invertible graphs including their spectral properties. Furthermore, we present special examples of pseudo-invertible graphs constructed from a given graph by adding hanging vertices or paths.


\begin{thebibliography}{9}

\bibitem{Bapat} R. B. Bapat, and E. Ghorbani, Inverses of triangular matrices and bipartite graphs, Linear Algebra and its Applications 447 (2014), 68-73.

\color{black}
\bibitem{Bapat2014}
R. B. Bapat,  Graphs and Matrices. Universitext. Springer, London; Hindustan Book Agency, New Delhi, 2nd edition,  2014, 193 pp., ISBN: 978-1-4471-6569-9. 
\color{black}


\bibitem{Ben}
\newblock A. Ben-Israel, T.N.E. Greville, Generalized Inverses: Theory and Applications, CMS Books Math., Springer, 2003.


\bibitem{Brouwer2012} Brouwer, A., Haemers, W.. Spectra of graphs. Universitext. Springer, New York, 2012. xiv+250 pp. ISBN: 978-1-4614-1938-9. 

\bibitem{bova} 
\newblock S. Boyd and L. Vandenberghe,
\newblock \emph{Convex Optimization},
\newblock Cambridge University Press New York, NY, USA, 2004.

\bibitem{Cvetkovic1988} D. Cvetkovi\'c, M. Doob, and H. Sachs, \textit{Spectra of graphs - Theory and application}, Deutscher Verlag der Wissenchaften, Berlin, 1980; Academic Press, New York, 1980.

\bibitem{Cvetkovic1978} D. Cvetkovi\'c, I. Gutman, and S. Simi\'c, On selfpseudo-inverse graphs, Univ. Beograd Publ. Elektrotehn. Fak. Ser. Mat. fiz., No. 602 -- 633, (1978),  111--117.

\bibitem{Cvetkovic2004} 
\newblock D. Cvetkovi\'c, P. Hansen and V. Kova\v{c}evi\v{c}-Vu\v{c}i\v{c},
\newblock On some interconnections between combinatorial optimization and extremal graph theory,
\newblock \emph{Yugoslav Journal of Operations Research}, \textbf{14} (2004), 147--154.

\bibitem{Godsil1985} C. D. Godsil, Inverses of Trees, Combinatorica 5 (1985), 33--39.

\bibitem{Godsil2001} Godsil, C, Royle, G., Algebraic graph theory. Graduate Texts in Mathematics, 207. Springer-Verlag, New York, 2001. xx+439.



\color{black}
\bibitem{Hall1976}
\newblock F. J. Hall,
\newblock Generalized Inverses of a Bordered Matrix of Operators, 
\newblock \emph{SIAM Journal on Applied Mathematics}, \textbf{29} (1975), 152--163. 

\bibitem{Hall1978}
\newblock  D. R. Batigne, F. J. Hall, and I. J. Katz, 
\newblock Further results on integral generalized inverses of integral matrices, 
\newblock \emph{Linear and Multilinear Algebra}, \textbf{6} (1978), 233--241. 
\color{black}



\bibitem{Har} F. Harary and H. Minc, Which nonnegative matrices are self-inverse? Math. Mag. (Math. Assoc. of America) 49 (1976) 2, 91--92.

\bibitem{KirklandAkb2007} S. J. Kirkland, and S. Akbari, On unimodular graphs, Linear Algebra and its Applications 421 (2007), 3--15.

\bibitem{KirklandTif2009} S. J. Kirkland, and R. M. Tifenbach, Directed intervals and the dual of a graph,
Linear Algebra and its Applications 431 (2009), 792--807.

\bibitem{McMc} C. McLeman, and A. McNicholas, Graph invertibility, Graphs Combin. 30 (2014), 977--1002.


\bibitem{Pavlikova1990} S. Pavl\'{\i}kov\'{a}, and J. Kr\v{c}-Jedin\'{y}, On the inverse and dual index of a tree,
Linear and Multilinear  Algebra 28 (1990), 93--109.

\bibitem{Pavlikova2015} S. Pavl\'{\i}kov\'{a}, A note on inverses of labeled graphs, Australasian Journal on Combinatorics, 67(2) (2017), 222--234.

\bibitem{Pavlikova2016}
\newblock S. Pavl\'{\i}kov\'{a}, and D. \v{S}ev\v{c}ovi\v{c},
\newblock On a construction of integrally invertible graphs and their spectral properties,
\newblock \emph{Linear Algebra and its Applications}, \textbf{532} (2017), 512--533.

\bibitem{Pavlikova2016-CMMS}
\newblock S. Pavl\'{\i}kov\'{a}, and D. \v{S}ev\v{c}ovi\v{c},
\newblock Maximization of the spectral gap for chemical graphs by means of a solution to a mixed integer semidefinite program,
\newblock \emph{Computer Methods in Materials Science}, \textbf{4} (2016), 169--176.

\bibitem{Pavlikova2019-NACO}
\newblock S. Pavl\'{\i}kov\'{a}, and D. \v{S}ev\v{c}ovi\v{c},
\newblock On Construction of Upper and Lower Bounds for the HOMO-LUMO Spectral Gap by Means of Semidefinite Relaxation Techniques,
\newblock \emph{Numerical Algebra, Control and Optimization}, \textbf{9} (2019), 53--69.


\bibitem{Pavlikova2019-Australasian} 
\newblock S. Pavl\'{\i}kov\'{a}, and nd J. \v{S}ir\'a\v{n},
\newblock Inverting non-invertible weighted trees, 
\newblock Australasian J. Combin., \textbf{75} (2019), 246–255.


\bibitem{Pavlikova2022-WWW}
\newblock S. Pavl\'{\i}kov\'{a}, and D. \v{S}ev\v{c}ovi\v{c},
\newblock Invertible and pseudo-invertible simple connected graphs, their spectra, positive and negative (pseudo)invertibility. Available online at: http://www.iam.fmph.uniba.sk/institute/sevcovic/inversegraphs/, July 2022.


\bibitem{ST}
\newblock D. \v{S}ev\v{c}ovi\v{c} and M. Trnovsk\'a,
\newblock Solution to the inverse Wulff problem by means of the enhanced semidefinite relaxation method,
\newblock \emph{Journal of Inverse and III-posed Problems}, \textbf{23} (2015), 263--285.


\bibitem{Ye} D. Ye, Y. Yang, B. Manda, and D. J. Klein, Graph invertibility and median eigenvalues,
Linear Algebra and its ApplicaNtions, 513(15) (2017), 304--323.

\bibitem{YeKlein} D.J. Klein,Y. Yang, and D. Ye, 
HUMO-LUMO Gaps for Sub-graphenic and Sub-buckytubic Species, 
Proceedings of the Royal Society A, 471 (2180) (2015), 20150138. 
 
\bibitem{Zas} 
\newblock T. Zaslavsky, 
\newblock Signed graphs, 
\newblock \emph{Discrete Applied Math.}, \textbf{4} (1982), 47--74.

\bibitem{Castro} 
\newblock N. Castro-González, M.F. Martínez-Serrano, J. Robles,
\newblock Expressions for the Moore–Penrose inverse of block matrices involving the Schur complement,
\newblock \emph{Linear Algebra and its Applications}, \textbf{471} (2015), 353–368.

\bibitem{Tian}
\newblock Y. Tian, Y. Takane, 
\newblock More on generalized inverses of partitioned matrices with Banachiewicz–Schur forms, 
\newblock \emph{Linear Algebra and its Applications}, \textbf{430} (2009) 1641–1655.


\bibitem{HesMal}
\newblock R. Hessert, S. Mallik, Moore-Penrose inverses of the signless Laplacian and edge-Laplacian of graphs, Discrete Mathematics \textbf{344} (2021), 112451.

\bibitem{McDonald}
\newblock J. McDonald, N. Raju, and K. C. Sivakumar, 
\newblock Group inverses of matrices associated with certain graph classes. 
\newblock The Electronic Journal of Linear Algebra \textbf{38} (2022), 204-220.



\bibitem{Gago}
\newblock S. Gago, 
\newblock Group inverse of the Laplacian of connections of networks.
\newblock Discrete Applied Mathematics \textbf{263} (2019), 152-159.

\bibitem{PaKri}
\newblock S. Pavl\'{\i}kov\'{a}, and N. Krivo\v{n}\'{a}kov\'{a},
\newblock Generalized inverses of cycles. 
\newblock Conference proceedings of Mathematics Information Technologies and Applied Sciences, University of Defence, Brno, 2018, pp. 110-117.

\bibitem{Lin2016}
\newblock Lin Chen and Jinfeng Liu,
\newblock  {Extremal values of matching energies of one class of graphs},
\newblock \emph{Applied Mathematics and Computation}, \textbf{273} (2016), 976--992.


\bibitem{Mohar2013} 
\newblock B. Mohar,
\newblock  {Median eigenvalues of bipartite planar graphs},
\newblock \emph{MATCH Commun. Math. Comput. Chem.} \textbf{70} (2013), 79--84.

\bibitem{Mohar2015} 
\newblock M. Mohar,
\newblock  {Median eigenvalues and the HOMO-LUMO index of graphs},
\newblock \emph{Journal of Combinatorial Theory, Series B}, \textbf{112} (2015), 78--92.

\bibitem{Li2013}
\newblock Xueliang Li, Yiyang Li, Yongtang Shi and I. Gutman,
\newblock  {Note on the HOMO-LUMO index of graphs},
\newblock \emph{MATCH Commun. Math. Comput. Chem.}, \textbf{70} (2013), 85--96.

\bibitem{Jaklic2012}
\newblock G. Jakli\'{c},
\newblock  {HL-index of a graph},
\newblock \emph{Ars Mathematica Contemporanea}, \textbf{5} (2012), 99--105.



\bibitem{Fowler2001}
\newblock P. W. Fowler, P. Hansen, G. Caporosi and A. Soncini,
\newblock  {Polyenes with maximum HOMO-LUMO gap},
\newblock \emph{Chemical Physics Letters}, \textbf{342} (2001), 105--112.

\bibitem{Fowler2010}
\newblock P. V. Fowler,
\newblock  {HOMO-LUMO maps for chemical graphs},
\newblock \emph{MATCH Commun. Math. Comput. Chem.}, \textbf{64} (2010), 373--390.

\bibitem{Gutman1979}
\newblock I. Gutman and D.H. Rouvray,
\newblock  {An Aproximate TopologicaI Formula for the HOMO-LUMO Separation in Alternant Hydrocarboons},
\newblock \emph{Chemical-Physic Letters}, \textbf{72} (1979), 384--388.


\bibitem{Zhang2002}
\newblock F. Zhang and Z. Chen,
\newblock  {Ordering graphs with small index and its application},
\newblock \emph{Discrete Applied Mathematics}, \textbf{121} (2002), 295--306.


\end{thebibliography}
\end{document}